\newcommand{\Z}{{\mathbb Z}}
\newcommand{\N}{{\mathbb N}}
\newcommand{\R}{{\mathbb R}}
\newcommand{\T}{{\mathbb T}}
\newcommand{\p}{{\phi}}
\newcommand{\Per}{\mathrm{Per}}
\newtheorem{theorem}{Theorem}[section]
\newtheorem{corollary}[theorem]{Corollary}
\newtheorem{lemma}[theorem]{Lemma}
\newtheorem{proposition}[theorem]{Proposition}
\newtheorem{assumption}{Assumption}
\theoremstyle{definition}
\newtheorem{definition}[theorem]{Definition}
\newtheorem{remark}[theorem]{Remark}
\newcommand{\eps}{\varepsilon}
\begin{document}
 
\title{Periodic partitions with minimal perimeter}

\author{Annalisa Cesaroni}
\address{Dipartimento di Matematica, Universit\`{a} di Padova, Via Trieste 63, 35131 Padova, Italy}
\email{annalisa.cesaroni@unipd.it}
\author{Matteo Novaga}
\address{Dipartimento di Matematica, Universit\`{a} di Pisa, Largo Bruno Pontecorvo 5, 56127 Pisa, Italy}
\email{matteo.novaga@unipi.it}

 \begin{abstract} 
We show existence of  fundamental domains which minimize a general perimeter functional
in a homogeneous metric measure space. 
In some cases, which include the usual perimeter in the universal cover of a closed Riemannian  manifold, and 
 the fractional perimeter in $\R^n$, we can prove regularity of the minimal domains.
As a byproduct of our analysis we obtain that a countable partition which is minimal for the fractional perimeter is locally finite and regular,
extending a result previously known for the local perimeter.
Finally,  in the planar case we provide a detailed description of  the fundamental domains which are minimal for a general anisotropic perimeter.
\end{abstract} 
  
\subjclass{ 
49Q05 
58E12 
35R11
}
\keywords{Isoperimetric partitions, fractional perimeter, anisotropic perimeter,  regularity}
 
\maketitle 
 
\tableofcontents
 
\section{Introduction}
In this paper we  deal with fundamental domains of finite perimeter, for a very general notion of perimeter functional,
in a homogeneous metric measure space $(X, \mu, d)$ equipped with a group $G$ of measure-preserving homeomorphisms,
and we look  for fundamental domains with minimal  perimeter,
which we call {\it isoperimetric fundamental domains}. A typical example of such space is the universal cover of a closed Riemannian manifold $M$, with the usual notion of surface area.

This question of basic interest has been already considered in the literature. 
In particular, in \cite{choe} the author proved  existence  and partial regularity of isoperimetric fundamental domains
of a closed Riemannian $3$-manifold $M$, with respect to the classical perimeter functional.  
If $M$ is irreducible,  i.e.  every embedded sphere in $M$ bounds a ball, he 
also showed existence of a fundamental domain of minimal perimeter among the class  of fundamental domains whose interior is  homeomorphic to a ball 
(the projection on $M$ of the boundary of such domains is usually called a {\it spine}), using the theory of integral varifolds.

In the $2$-dimensional case the situation is relatively well understood: 
every fundamental domain
 with least boundary length is 
homeomorphic to a disk, its boundary consists of geodesic segments meeting each other
at angles of $2\pi/3$, and the number of such segments is $6-6\chi(M)$ (see \cite[Section 4]{choe} and \cite[Theorem 1.2]{mnpr}). 
The paper \cite{mnpr} also contains a detailed study of  minimal  spines, which are  critical points of the length functional.  
Finally, in \cite{h} a similar isoperimetric problem has been  considered in the case of  $2$-dimensional tori, restricting the class of domains to centrally symmetric convex sets. In this case the author shows that the only minimizers are hexagons and parallelograms.

Here we shall consider this isoperimetric problem in greater generality. We introduce a notion of perimeter which is sufficiently general 
to include the classical  local perimeter (isotropic and anisotropic) and the nonlocal perimeter of fractional type, and we provide existence 
of a fundamental domain of minimal perimeter by exploiting a by now standard procedure based on 
lower semicontinuity and compactness properties of the perimeter functional, together with a concentration compactness argument. 
This argument dates  back to Almgren \cite{alm} and has been used several times in isoperimetric problems in order to deal  with the possible  loss of 
mass at infinity of minimizing sequences. 

Let us point out that the existence of an isoperimetric fundamental domain is equivalent to the existence of a minimal partition of the space $X$ among all partitions which are invariant  with respect to the action of the group $G$. Exploiting this fact, we then pass to the analysis of the regularity property of 
 minimal fundamental domains. We reduce first of all to the case  in which  either $X$  is the universal 
 covering of a closed Riemannian manifold, and $\Per$ is the perimeter functional on $M$ 
 associated to a given norm (and then lifted to $X$) or $X=\R^n$ with the group of integer translations 
 and $\Per$ is either the local  or the fractional perimeter.  
 First of all we observe that   the minimal partitions among $G$-periodic partitions are also $(\Lambda,r)$-mininizer 
 of the perimeter functional for $r$ smaller than the injectivity radius of $M$ (see Definition \ref{defminimi} and Proposition \ref{partmin}).

The first regularity result that we prove is the boundedness of every isoperimetric fundamental domain $D$. This property 
  is equivalent to the local finiteness of the $G$-periodic partition of $X$ generated by $D$, see Proposition \ref{locallyfinite}. This result, in the   case of countable partitions 
and local classical perimeter, is due to \cite{mt} in the context of variational  image segmentation problems  (see also the previous paper \cites{ct1,ct2}).  
First of all, the local finiteness of every locally minimal conical partition is proved, 
by using the Elimination Lemma \ref{infiltration},   and a dimensional  reduction procedure, see Proposition  \ref{conicalpart}. Then, the case of general $(\Lambda,r)$-minimal partitions
is obtained by applying a blow-up procedure, then by showing that the blow-up of a minimal partition is a conical minimal partition, and finally by concluding again with the Elimination Lemma. 
To obtain that the blow-up of a partition is a conical partition, a monotonicity formula is used: this has been provided for countable partitions and the classical perimeter
 functional in \cite[Lemma 5]{mt}, whereas in for finite partitions and the fractional perimeter has been obtained in \cite[Theorem 3,10]{cm} 
 by passing to the extension problem. In this paper we adapt the previous result also to the case of countable partitions. In particular, as a byproduct, we have that countable partitions which are $(\Lambda,r)$- minimal for the fractional perimeter are locally finite.
  
 Once that we get the boundedness of the fundamental domain $D$, we exploit  
 regularity results for finite partitions (obtained for the classical perimeter in \cite{mt} and for the fractional perimeter in \cite{cm}) concluding that the boundary of $D$ is a smooth hypersurface, 
 up to a  nonempty closed singular set of Hausdorff dimension at most $n-2$ (discrete for $n=2$).  
 As a byproduct of this result, we  get  that actually there exist  minimal cones
(with more than two phases)  for the fractional perimeter.  We recall that the authors in \cite{cn} showed  that, 
 when the fractional order of the perimeter is sufficiently closed to $1$, the planar $3$-cone with angles of $2\pi/3$ is locally minimal. 

We observe that these arguments do not apply directly to the case of the anisotropic perimeter, due to the absence  of a monotonicity formula; 
in particular we cannot prove the boundedness of 
isoperimetric fundamental domains in any dimension, but only in the case of the plane, see Proposition \ref{probounded}. 
 Nevertheless, for sufficiently regular anisotropies (that is uniformly convex $C^2$ anisotropies) we get that the boundary of every isoperimetric fundamental domain is  
 up to closed singular set of Hausdorff dimension at most $n-2$ (discrete for $n=2$), a  $C^{1,\alpha}$ hypersurface (or more regular if the anisotropy is more regular).
Finally, for the case of  anisotropic homogeneous perimeter in the plane, we  show that for strictly 
convex anisotropies the only fundamental domains are hexagons and parallelograms, and moreover, that if the anisotropy is also differentiable then parallelograms cannot be minimizers. 

We conclude by mentioning that  we do not discuss here the interesting question if the isoperimetric fundamental domains have interior homeomorphic to a ball, or more generally if  they are contractible. In the case of planar flat torus with the local (possibly anisotropic) perimeter, the answer is affermative since  isoperimetric  domains are hexagons or a parallelograms.
In the $3$-dimensional case, with the usual perimeter, Lord  Kelvin proposed in \cite{thompson} (see also \cite{kelvin}) an explicit candidate which is homeomorphic to a ball, but it is still an open question whether such candidate is actually a minimizer. 

\smallskip
The plan of the paper is the following: in Section \ref{secnot} we introduce the notion of perimeter, fundamental domain and $G$-periodic partition. 
In Section \ref{secex} we prove existence of a minimal $G$-periodic partition or, equivalently, of an isoperimetric fundamental domain.
In Section \ref{secloc} we consider the case of $X=\R^n$ and $G=\Z^n$, and we prove partial regularity of minimal partitions when the perimeter functional is the local perimeter or the fractional perimeter.
Eventually, in Section \ref{secplane} we discuss in detail the case of the anisotropic perimeter in the plane,
showing that a minimal partition is given by a locally finite Steiner network, and the isoperimetric fundamental domain is a centrally symmetric convex hexagon or parallelogram.

\smallskip

{\it Acknowledgements.} The authors are member of INDAM-GNAMPA; the second author was supported by the PRIN Project 2019/24. 

 \section{Notation and setting} \label{secnot}

Let $(X, d, \mu)$  to be a locally compact complete metric measure space, equipped with a distance $d$ and a $\sigma$-finite Radon measure $\mu$, with  $\mu(X)>0$.  We denote by $B(X)$  the Borel $\sigma$-algebra of $X$, and by $A(X)$ the class of open subsets of $X$.  Moreover $L^1$ (resp.  $L^1_{\text{loc}}$) will be the usual Lebesgue space of $\mu$-integrable functions  over $X$ (resp.  $\mu$-integrable functions over compact subsets of $X$).

Let $G$ be  a countable group  of isometries  of $X$ which preserve the measure $\mu$,  acting properly discontinuously on $X$, i.e.
$\{g \in G : gK \cap K\neq \emptyset \}$ is finite for every compact set  $K\subseteq X$.   
 
 \begin{definition}[Fundamental Domain]  
A fundamental domain of $X$ for the action of $G$ is   a set  which contains almost all  representatives for the orbits of $G$  and such  that the points whose orbit has more than one representative has measure zero, i.e. a measurable set $D\subseteq X$ such that   $\mu(gD \cap  D)=0$ for every $g\in G$ with $g\neq id$, and $\mu(X\setminus G  D)=0 $.  \\
We denote by $\mathcal D$ the set of all fundamental domains of $X$. 
\end{definition} 

\begin{lemma}\label{lemfd} 
Let $E\subset X$ be a measurable set such that   $\mu(g E \cap  \tilde g E)=0$ for every $g, \tilde g\in G$ with $g\neq \tilde g$,
and let $D$ be a fundamental domain for the action of $G$. Then $\mu(E)\le \mu(D)$. 
\end{lemma}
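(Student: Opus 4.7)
The plan is to use $D$ as a measuring stick by slicing $E$ along the orbit decomposition induced by $D$, then translating each piece back into $D$ and exploiting the hypothesis on $E$ to see that these translated pieces are essentially disjoint.

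First I would write $E$ as an essentially disjoint union over the $G$-translates of $D$. Since $D$ is a fundamental domain, $\mu(X \setminus GD) = 0$ and $\mu(gD \cap hD) = 0$ for $g \neq h$; hence
\[
\mu(E) = \sum_{g \in G} \mu(E \cap gD),
\]
the sum being countable because $G$ is countable.

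Next I would pull each slice back to $D$ using the measure-preserving action: for each $g\in G$,
\[
\mu(E \cap gD) = \mu\bigl(g^{-1}(E \cap gD)\bigr) = \mu(g^{-1}E \cap D),
\]
so $\mu(E) = \sum_{g\in G}\mu(g^{-1}E \cap D)$. The hypothesis on $E$, applied with $\tilde g = h^{-1}$ and $g = g^{-1}$, gives $\mu(g^{-1}E \cap h^{-1}E)=0$ whenever $g\neq h$, so the family $\{g^{-1}E\cap D\}_{g\in G}$ consists of pairwise essentially disjoint subsets of $D$. Therefore
\[
\mu(E) = \sum_{g \in G} \mu(g^{-1}E \cap D) \le \mu(D),
\]
which is the desired inequality.

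There is no real obstacle: the argument is pure measure-theoretic bookkeeping, and the only point requiring minor care is to verify that the $\mu$-null exceptional sets appearing in the definition of fundamental domain and in the hypothesis on $E$ do not accumulate problematically, which is immediate since $G$ is countable.
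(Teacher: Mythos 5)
Your proof is correct and follows essentially the same route as the paper: both decompose $E$ into the slices $E\cap gD$, pull them back to $D$ by the measure-preserving map $g^{-1}$, and use the hypothesis on $E$ to see that the pulled-back slices are pairwise essentially disjoint inside $D$. The only difference is cosmetic: the paper phrases the final step as the measure of an essentially disjoint union inside $D$, whereas you sum the measures directly.
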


\begin{proof}
We define $E_g=E\cap g D$ for $g\in G$. Then $\mu(E\setminus \cup_g E_g) = 0$,
$\cup_g  g^{-1} E_g\subseteq D$ and $\mu(g^{-1} E_g \cap \tilde g^{-1} E_{\tilde g})=0$ for $g\neq \tilde g$.
Therefore we have
\[
\mu(E)=\mu(\cup_g g^{-1}E_g)=\sum_g \mu(g^{-1} E_g)\le \mu(D). 
\]
\end{proof} 

\begin{corollary}\label{volumecoro} 
If $D_1, D_2$ are fundamental domains, then $\mu(D_1)=\mu(D_2)$. 
\end{corollary}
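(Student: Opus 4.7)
The plan is to apply Lemma \ref{lemfd} twice, once in each direction, after verifying that any fundamental domain satisfies the hypothesis of that lemma.

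First I would observe that the defining condition $\mu(g D \cap D) = 0$ for all $g \neq id$ actually implies the stronger symmetric condition $\mu(g D \cap \tilde g D) = 0$ for all $g \neq \tilde g$ in $G$. Indeed, since $G$ acts by measure-preserving maps,
\[
\mu(g D \cap \tilde g D) = \mu\bigl(\tilde g^{-1}(g D \cap \tilde g D)\bigr) = \mu\bigl((\tilde g^{-1} g) D \cap D\bigr),
\]
and $\tilde g^{-1} g \neq id$ precisely when $g \neq \tilde g$, so the right-hand side vanishes.

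Next, taking $E = D_1$ (which is a fundamental domain, hence satisfies the hypothesis of Lemma \ref{lemfd} by the observation above) and using $D = D_2$ in that lemma yields $\mu(D_1) \le \mu(D_2)$. Swapping the roles of $D_1$ and $D_2$ gives the reverse inequality $\mu(D_2) \le \mu(D_1)$, so equality holds.

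There is no real obstacle; the only subtlety to flag is the reduction from the two-variable disjointness condition used in Lemma \ref{lemfd} to the one-variable condition in the definition of fundamental domain, which follows from $G$-invariance of $\mu$. (Note that if $\mu(D_1)=\mu(D_2)=\infty$ both applications of the lemma still give trivially valid equalities.)
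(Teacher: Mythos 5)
Your proposal is correct and takes the same route the paper intends: the paper states Corollary \ref{volumecoro} without proof, treating it as an immediate consequence of Lemma \ref{lemfd} applied symmetrically, which is exactly what you do. Your explicit remark that the one-variable condition $\mu(gD\cap D)=0$ implies the two-variable hypothesis $\mu(gD\cap\tilde g D)=0$ via $G$-invariance of $\mu$ is a useful detail that the paper leaves tacit, but it does not change the argument.
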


\begin{corollary}
Let $E,\,D$ be as in Lemma \ref{lemfd}. If $\mu(E)=\mu(D)$ then $E$ is also a fundamental domain. 
\end{corollary}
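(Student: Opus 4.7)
The plan is to verify the two defining properties of a fundamental domain for $E$. The first, $\mu(gE \cap E) = 0$ for every $g \neq \mathrm{id}$, is precisely the hypothesis of Lemma \ref{lemfd} specialized to $\tilde g = \mathrm{id}$, so nothing new is required. The substance of the proof is to establish that $\mu(X \setminus GE) = 0$.

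To do so, I would revisit the proof of Lemma \ref{lemfd} in the equality case. Keeping the notation $E_g := E \cap gD$, that proof shows the sets $g^{-1}E_g$ are essentially pairwise disjoint subsets of $D$, and
\[
\mu(E) \;=\; \sum_g \mu(g^{-1}E_g) \;=\; \mu\Bigl(\bigcup_g g^{-1}E_g\Bigr) \;\le\; \mu(D).
\]
When $\mu(E) = \mu(D) < \infty$, this forces $\mu\bigl(D \setminus \bigcup_g g^{-1}E_g\bigr) = 0$. Since each $g^{-1}E_g = g^{-1}E \cap D$ is contained in $GE$, we obtain $\mu(D \setminus GE) = 0$.

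Finally, I would propagate this local negligibility statement to all of $X$ using the group action. Because $GE$ is $G$-invariant and every $\tilde g \in G$ preserves $\mu$, we have $\mu(\tilde g D \setminus GE) = \mu(\tilde g D \setminus \tilde g GE) = \mu(D \setminus GE) = 0$ for every $\tilde g \in G$. Summing over the countable group $G$ and using $\mu(X \setminus GD) = 0$,
\[
\mu(X \setminus GE) \;\le\; \mu(X \setminus GD) + \sum_{\tilde g \in G} \mu(\tilde g D \setminus GE) \;=\; 0,
\]
which concludes the verification.

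I do not expect any substantial obstacle: the argument is essentially a matter of tracking the equality case of Lemma \ref{lemfd} and then exploiting the countability of $G$ together with measure-preservation to spread a single negligibility statement over $X$. The only point that requires a minor comment is the implicit assumption $\mu(D) < \infty$, which is the setting of all the applications in the paper; in the general $\sigma$-finite case one runs the same reasoning on an exhaustion of $X$ by sets of finite measure.
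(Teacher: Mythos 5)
Your proposal is correct and follows essentially the same route as the paper's proof: both extract $\mu(D\setminus\bigcup_g g^{-1}E_g)=0$ from the equality case of Lemma \ref{lemfd} and then transfer this null set along the $G$-action using $\mu(X\setminus GD)=0$. The paper compresses your final summation into the identity $\mu(X\setminus G(\bigcup_g g^{-1}E_g))=\mu(X\setminus G(\bigcup_g E_g))=\mu(X\setminus GE)$, but the underlying reasoning is identical, and your remark on the finiteness of $\mu(D)$ is a fair observation about a hypothesis that the paper uses implicitly.
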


\begin{proof}
We have $\mu(E)=\mu(\cup_g g^{-1}E_g)=\mu(D)$, so that $\mu(D\setminus (\cup_g g^{-1}E_g))=0$. It follows that
$$
0 = \mu(X\setminus G(\cup_g g^{-1}E_g))=\mu(X\setminus G(\cup_g E_g))=\mu(X\setminus G E).
$$
\end{proof} 

\begin{assumption}\label{ass1}  \upshape 
We shall assume that there exists a  fundamental domain  $D\subseteq X$ such that $\overline D$ is compact
and $\mu(\partial D)=0$.   
\end{assumption} 

 
It is possible to show (see \cite[Lemma A1]{npst}) that Assumption \ref{ass1}  is satisfied if $X$ is compactly generated, that is, there exists a compact set $K$ such that $GK=X$, or equivalently the quotient $X/G$ is compact, and if the set of fixed points for $G$ (that is, the set of points $x\in X$ such that there exists $g\neq id$, $g\in G$, for which $gx=x$) has measure 0.  If the action of $G$ is free, that is there are no fixed points, then it is possible to show that there exists a fundamental domain with $\mu(\partial D)=0$ and $g_i D \cap  g_j D=\emptyset$ for all $g_i\neq g_j$. 
 
%
 \smallskip
 \paragraph{\bf Universal covering of  Riemannian manifolds.}  
Let  $M$ be a closed Riemannian manifold, with $\mu$ the associated volume measure,  and  let $X$ be the universal covering of $M$, with  $\pi:X\to M$ the projection map. Then $X$ is a metric measure space with the Riemannian distance and  the $\sigma$-finite Radon measure (which we still denote by $\mu$) inherited from $M$.   
  
The projection $\pi$ is a local isometry and, 
for every $p\in M$, there is a connected  neighborhood $U$ such that $\pi^{-1}(U)=\cup_i V_i$, where $V_i\cap V_j=\emptyset$ and each $V_i$   is mapped homeomorphically onto $U$ by $\pi$.  
We consider the fundamental group $\pi_1(M)$ of $M$: this identifies the group $G$ of deck trasformations (homeomorphisms of $X$ commuting with $\pi$) and we have that $M\sim X/G$.  $G$ is a countable  group and acts  properly discontinuously on $X $, and every element in the group is an isometry which preserves the measure.
 
 In this case, every fundamental domain of $X$ is a measurable set $D\subseteq X$ such that $\mu(D)=\mu(\pi(D))=\mu(M)$, and  $\mu(g_i D \cap  g_j D)=0$ for every $g_i, g_j\in G$ with $g_i\neq g_j$.   If $D$ is a fundamental domain   which is homeomorphic to an open ball, we will say that $\pi(\partial D)$ is a  spine of $M$ since   $M\setminus \pi(\partial D)$ is homeomorphic to an open ball.  
 
 A simple example is  given by the $n$-dimensional  torus $M=\T^n=\R^n/\Z^n$, where $\pi$ is the standard projection map $\R^n\to \R^n/\Z^n$ and the group $G$ is the group of discrete translations.   In this case, every fundamental domain $D$ of $M$ gives rise to a  $\Z^n$-periodic partition of $\R^n$ with cells of volume $1$, that is, $\R^n=\cup_{z\in \Z^n} (D+z)$, $|D|=1$  and $|(D+z)\cap (D+k)|=  0$ for every $z,k\in \Z^n$ with $z\neq k$.

\subsection{Perimeters} 
 
Following \cite{npst}, we define a perimeter on $X$ as a functional \[\Per: B(X) \times A(X)\to [0, +\infty]\] satisfying the following properties:
\begin{enumerate}
\item Semicontinuity: $\Per (D, U ) \leq \liminf_k \Per(D_k, U)$, if $D_k\to D$ in $L^1_{loc}$.
\item Monotonicity: $\Per(B, U)\leq \Per(B,V)$ is $U\subseteq V$.
\item Continuity: $\Per(B, U_k)\to \Per(B, U)$ if $U_k\to U $ in $L^1$ and $U_k\subset U_{k+1}$.
\item Invariance by the action of $G$: $\Per(gB, gU)=\Per(B,U)$ for all $g\in G$.
\item Compactness:  if $E_k\subseteq X$ satisfy $\sup_k\Per(E_k, U)<+\infty$ for some precompact set $U$, then up to subsequences $E_k\cap U\to E\cap U$ in $L^1$.
\item Submodularity:  let $E_1, E_2\in B(X)$, then $\Per(E_1\cap E_2)+\Per (E_1\cup E_2)\leq \Per(E_1)+\Per(E_2)$. In particular  if $E_i$ are such that $\mu(E_i\cap E_j)=0$, for $i\neq j$, then 
$\Per(\cup_i E_i)\leq   \sum_i\Per(E_i).$
\item Almost subadditivity: 
\begin{enumerate}\item
 there exists a function $\phi :\R\to  [0, +\infty)$ with $\phi(t)\to 0$ as $t\to +\infty$ 
 such that, if $U_i\cap U_j=\emptyset$ for $i\ne j$, then
 \\  $\  \Per (E, \cup_i U_i)\leq \sum_i \Per(E, U_i) \leq \Per (E, \cup_i U_i)+ \sum_{i} \mu(E\cap U_i)  \min_{j\neq i}\phi(d(U_i, U_j));$
\item there exists $c\geq 1$ such that, if $U_i\cap U_j=\emptyset$ for $i\ne j$ and $\mu(X\setminus \cup_i  U_i)=0$,  then \\ 
$\ \sum_i \Per(E, U_i)\leq c\,\Per (E).$
\end{enumerate} 
\item Local isoperimetric inequality:  there exist  $\eps>0$  and a nondecreasing function $f:[0, +\infty)\to [0, +\infty)$  with $f(0)=0$, $f'(0)=+\infty$ such that 
\[  \Per (E, B^\circ)\geq f(|E\cap B^\circ|)\qquad \forall E\subseteq \tilde M,\ \mu(E\cap   B^\circ)\leq \eps, \]  
where $B^\circ$ is the interior of a fundamental domain of $X$ which satisfies Assumption \ref{ass1}. 
\end{enumerate} 
 
Let us provide two relevant examples of perimeter functionals.

\subsubsection{\it Local perimeters.} \label{exper}
 
 
 \begin{enumerate}
 \item Let $M$ be a closed Riemannian manifold, with $\mu$ the associated volume measure,  and  $X$ the universal covering of $M$, with  $\pi:X\to M$ the projection map and $G$ the group of deck transformations.  We define a continuous function $\phi:TM\to [0, +\infty)$ which is convex, positively $1$-homogeneous, symmetric and coercive in the second variable. 
So the lifting of $\phi$ to $ TX$ defines a $G$-periodic family of norms, and the anisotropic perimeter associated to $\phi$ satisfies our assumptions. \\
As an example we can take $X=\R^n$, and $G=\Z^n$
acting by translations, so that $M=\R^n/\Z^n$ is the $n$-dimensional flat torus. 
In this case $\Per$ is the relative perimeter associated to the given family of norms, that is, $\Per(E)=\int_{\partial E} \phi(x, \nu(x))dH^{n-1}(x).$
 Another example is the hyperbolic plane $X=H$, with  its canonical volume measure $\mu$, $G$ any countable Fuchsian group (i.e. a discrete subgroup of isometries of $H$) acting properly discontinuously and cocompactly on $H$,  and $\Per$ the classical Riemannian perimeter in $H$.
\item Let $X=H$ be the Heisenberg group of topological dimension $3$, with $\mu$  its Haar measure, and $G$ the discrete Heisenberg
group generated by the triangular matrices $\left(\begin{array}{ccc} 1 & x&z\\ 0&1& y\\0&0&1\end{array}\right)$ for $x,y,z\in\Z$. Let  $X_1, X_2$    be left-invariant vector fields satisfying the H\"ormander condition, and $\Per$
be the sub-Riemannian relative perimeter functional corresponding to the choice of
$X_i$ , and $\Per$ be the sub-Riemannian perimeter functional (see for instance \cite{fssc}). 
\end{enumerate} 
  
Notice that the perimeters above satisfy the almost subadditivity property in a strict sense, i.e. with $\lambda=0$ and $c=1$
(we refer to \cite[Section 6]{npst} for more details).  
 
\subsubsection{\it Nonlocal perimeters.} \label{experfr}
Let $X=\R^n$, $\mu$ the Lebesgue measure, $G=\Z^n$ 
acting by translations,  and $K:\R^n\to \R$ an interaction kernel satisfying
\begin{itemize}\item $K(h)= K(-h)$ for all $h\in \R^n$,\item $\min(|h|,1) K(h)\in L^1(\R^n)$, \item there exists $C>0$ and $s\in (0,1)$ such that $K(h)\geq C |h|^{-n-s}$.  
\end{itemize} 
For $E\subseteq \R^n$ we define the nonlocal perimeter of $E$ as follows:  
 \begin{equation}\label{frac}
  \Per(E):=  \int_{E }\int_{\R^n\setminus E}    K( x-y)dxdy  \end{equation}
 and its localized version, for  $U\subseteq \R^n$ open set, as 
 \[
 \Per(E, U):=  \int_{E\cap U}\int_{\R^n\setminus E}   K( x-y)dxdy+ \int_{E\setminus U}\int_{U\setminus E}    K( x-y) dxdy
 \]\[= \frac{1}{2}\int_{U\times U} (\chi_E(x)-\chi_E(y))^2      K( x-y)d xdy+ \int_{U\times \R^n\setminus U} (\chi_E(x)-\chi_E(y))^2    K( x-y) dxdy.\] 
 The lower semicontinuity with respect to $L^1_{\text{loc}}$ convergence   is a direct consequence of Fatou's Lemma.  The monotonicity and the continuity property with respect to increasing sequences of open sets $U_i$ are consequences of the definition of the perimeter and of the monotone convergence theorem. The invariance with respect to the action of the group $G$ (i.e., the invariance with respect to translations) follows from the definition. 
 
 The compactness property has been proved in \cite[Theorem 1.2]{jw} (see also \cite{bs}), in particular the assumption $K\not\in L^1(B(0,1))$ is necessary to get this result. As for the subadditivity it is easy to check (see e.g. \cite[Lemma 2.4]{dnrv}) that
 if $U_i\cap U_j=\emptyset$ for $i\ne j$, then
 \begin{eqnarray*} && \Per(E, \cup_i U_i)- \sum_i \Per(E, U_i) = \\ 
 &=& - \sum_{i\ne j}\int_{E\cap U_i}\int_{U_j\setminus E}  K(x-y)dxdy   
 \geq  - \sum_i \sum_{j:\,i\ne j} \int_{E\cap U_i}\int_{U_j}  K(x-y)dxdy \\
 &\geq& - \sum_i |E\cap U_i| \int_{\R^n\setminus B_{d_i}(0)} K(h)dh
 =  - \sum_i |E\cap U_i| \phi(d_i), 
  \end{eqnarray*}
 where we let $d_i:= \min_{j\ne i} dist(U_i,U_j)$ and
 \[
 \phi(t) := \int_{\R^n\setminus B_{t}(0)} K(h)dh.
 \]
 Moreover if $\cup_i U_i=\R^n$ and $U_i\cap U_j=\emptyset$,  then
\begin{eqnarray*} && \sum_i\Per(E, U_i) =\sum_i \int_{E\cap U_i}\int_{\R^n\setminus E} K(x-y)dxdy+ \int_{E\setminus U_i}\int_{U_i\setminus E} K(x-y)dxdy \\ 
 &\leq & \sum_i \int_{E\cap U_i}\int_{\R^n\setminus E} K(x-y)dxdy+ \sum_i \int_{E}\int_{U_i\setminus E} K(x-y)dxdy\\ &=&
  \int_{E\cap \cup_i U_i}\int_{\R^n\setminus E} K(x-y)dxdy+ \int_{E}\int_{\cup_i U_i\setminus E} K(x-y)dxdy =2\Per (E).  \end{eqnarray*}
  On the other hand, the submodularity is easily checked since 
  \[ \Per(E_1\cup E_2)+\Per(E_1\cap E_2)=\Per(E_1)+\Per(E_2)-2\int_{E_2\setminus E_1}\int_{E_1\setminus E_2} K(x-y)dxdy.\]
  By applying recursively this formula in the case of a family $E_i$, with $E_i\cap E_j=\emptyset$, we get
 \begin{equation}\label{unione} \Per(\cup_i E_i)=\sum_i \Per(E_i) -2\sum_{i\neq j} \int_{E_i}\int_{E_j} K(x-y)dxdy.\end{equation}

  
  In the special case $K(h)=|h|^{-n-s}$ for $s\in (0,1)$, the nonlocal perimeter is usually called fractional perimeter and has been 
  introduced and studied in \cite{crs}  (see also \cites{cn,cm}).
  As for the isoperimetric inequality, in \cite[Lemma 2.5]{dnrv} it is proved that if $U$ is a bounded open set, then for every $E$ with $|E\cap U|\leq |U|/2$, there holds: \[\Per(E,U)\geq C |E\cap U|^{\frac{n-s}{n}}.\]
 For general kernels satisfying the  assumption that $K(h)\geq C|h|^{-n-s}$, the same inequality easily follows.

\subsection{Partitions}

\begin{definition}
A partition of $X$ is a collection of measurable subsets $\{E_k\}_{k\in\mathbb I}$,  where $\mathbb I$ is either a finite or a countable set of ordered indices, such that 
\begin{enumerate} 
\item $\mu(E_k)>0$ for all $k$,
\item $\mu(E_k\cap E_j)=0$ for all $k\ne j$,
\item $\mu(X\setminus\cup_k E_k)=0$.
\end{enumerate}
\end{definition}

 We introduce the notion of topological boundary of a partition  $\{E_k\}_{k\in\mathbb I}$ as
\[\partial \{E_k\}_{k\in\mathbb I}:=
\{x\in \R^n\ |\ \text{ for every $\rho>0$ there exists $k\in \mathbb I$ s.t.}\ 
0<\mu(E_k\cap B(x, \rho))<\mu(B(x, \rho))\}\] 
and the notion of reduced boundary as 
\[\partial^*\{E_k\}_{k\in\mathbb I}:=\bigcup_{i\in \mathbb I} \bigcup_{j<i} \partial^* E_i\cap \partial^* E_j,\]  
where $\partial^* E_i$ is the reduced boundary of $E_i$ (see \cite{maggi}).
 
We also recall the definition of conical partitions and blow-up of a partition. 
\begin{definition}\label{conical} 
A partition $\{E_k\}_{k\in\mathbb I}$ is conical (with vertex $0$) if  $rE_k=E_k$ for every $r>0$ and every $k\in \mathbb I$. \end{definition} 

\begin{definition}\label{blowup} The blow-up of a partition  $\{E_k\}_{k\in\mathbb I}$ at $x\in\R^n$ and scale $\eps>0$ is the partition defined as 
\[E^{x,\eps}_k:=\frac{E_k-x}{\eps}\qquad k\in\mathbb I. \]  
The regular set of the partition  is the set of points $x\in \partial \{E_k\}_k$  such that there exist an open half-space
$H\subseteq \R^n$ and $i,j\in \mathbb{I}$ such that as $\eps\to 0$, $E_i^{x,\eps}\to H$, $E^{x,\eps}_j\to \R^n\setminus H$ and $E^{x,\eps}_k\to \emptyset$ for $k\neq i,j$, in $L^1_{\text{loc}}(\R^n)$.  
\end{definition}

\noindent Notice that a fundamental domain $D$ naturally induces the $G$-periodic partition $\{E_g\}_{g\in G}$, where $E_g = g D$.

\begin{proposition} \label{locallyfinite} 
If the fundamental domain $D$ is precompact, then the  $G$-periodic partition $\{E_g\}_{g\in G}$ induced by $D$ is locally finite. 
\end{proposition}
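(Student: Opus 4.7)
The plan is straightforward: local finiteness of the induced partition will follow from the precompactness of $D$ together with the properly discontinuous action of $G$, which is part of the standing assumption on the group.

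First I would unpack the definition. To say that the partition $\{E_g\}_{g\in G}$ with $E_g = gD$ is locally finite means that for every compact set $K \subseteq X$, the collection $\{g \in G : \mu(gD \cap K) > 0\}$ is finite (equivalently, every point has a neighborhood meeting only finitely many cells with positive measure). Since $\mu(gD \cap K) > 0$ in particular implies $g\overline{D} \cap K \neq \emptyset$, it suffices to show finiteness of $\{g \in G : g\overline{D} \cap K \neq \emptyset\}$.

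The key step is then to combine the two compact pieces into one. Set $K' := K \cup \overline{D}$; since $\overline{D}$ is compact by the precompactness hypothesis and $K$ is compact by assumption, $K'$ is compact. If $g\overline{D} \cap K \neq \emptyset$ then in particular $gK' \cap K' \supseteq g\overline{D} \cap K \neq \emptyset$, so
\[
\{g \in G : g\overline{D} \cap K \neq \emptyset\} \subseteq \{g \in G : gK' \cap K' \neq \emptyset\}.
\]
Properly discontinuous action of $G$ guarantees that the right-hand set is finite, and hence so is the left-hand set. This yields local finiteness of $\{E_g\}_{g\in G}$.

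There is no real obstacle here; the only point requiring a small check is the passage from $\mu(gD \cap K) > 0$ to $g\overline{D} \cap K \neq \emptyset$, which is immediate because $gD \subseteq g\overline{D}$ and a set of positive measure is non-empty. The entire argument relies only on the set-theoretic structure of $G$ acting properly discontinuously and does not use any of the perimeter axioms.
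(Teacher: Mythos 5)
Your argument is correct and is essentially the same as the paper's: both form the compact set $\overline{D}\cup K$ and invoke proper discontinuity to conclude that only finitely many $g$ can move it to meet itself, hence only finitely many translates $gD$ can meet $K$. No further comments.
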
 

\begin{proof} Let us consider a compact set $K$. Note that $K=\cup_{g\in H} gD\cap K$, where $H=\{g\in G:\ gD\cap K\neq \emptyset\}$. We want to prove that $H$ is finite. Indeed, let us consider the compact set $\tilde K:=\overline D\cup K$.
Since the the group $G$ acts properly discontinuously, we have
 $g\tilde K\cap \tilde K\neq\emptyset$ only for $g$ in a finite subset $\tilde G$ of $G$. 
 It is immediate to observe that $H\subseteq \tilde G$. 
\end{proof} 

\begin{definition}\label{defminimi}
We say that a partition $\{E_k\}_k$ is $\Lambda$-minimal in an open set $A\subset X$, for some $\Lambda\ge 0$, if
$\sum_k \Per(E_k,A)<+\infty$ and 
$$
\sum_k \Per(E_k,A)\le \sum_k \big[\Per(F_k,A) + \Lambda \mu(E_k\Delta F_k)\big],
$$ 
for every partition $\{F_k\}_k$ of $X$ such that $E_k\Delta F_k \Subset A$ for all $k$.\\
We say that the partition is $(\Lambda,r)$-minimal (see \cite{maggi}) for some $\Lambda\ge 0$ and $r>0$, 
if  it is $\Lambda $-minimal in $B_r(x)$ for all $x\in X$.\\
\end{definition}

We observe that, for conical partitions, being $(\Lambda,r)$-minimal  is equivalent to being $(0, \infty)$-minimal. 

 \section{Existence of minimal $G$-periodic partitions} \label{secex}
 
 \begin{definition}   
 A fundamental domain is called isoperimetric if it is a solution to the minimization problem 
 \begin{equation}\label{iso} \inf\{ \Per(D)\ | \ D\in \mathcal{D}\}.
 \end{equation} 
 \end{definition} 
 
In this section we prove existence of solutions to the isoperimetric problem \eqref{iso}.  
Notice also that this is equivalent to find a minimal partition among all possible $G$-periodic partitions.

 \begin{theorem}\label{isothm}
 There exists an isoperimetric fundamental domain $D$.
 \end{theorem}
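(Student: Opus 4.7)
The plan is to apply the direct method of the calculus of variations: take a minimizing sequence $D_n\in\mathcal D$ with $\Per(D_n)\to\inf_{\mathcal D}\Per$, extract an $L^1_{\text{loc}}$-convergent subsequence, and conclude by lower semicontinuity of $\Per$. The main difficulty is preventing the sequence from drifting to infinity along the $G$-orbit, or from splitting its mass among several widely separated locations; this is where a concentration-compactness argument in the spirit of Almgren \cite{alm} is needed.

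First I would establish a tightness statement: there exist $g_n\in G$ and $\delta>0$ such that $\mu(g_n D_n\cap D^\circ)\ge\delta$ for every $n$, where $D$ is the reference fundamental domain of Assumption \ref{ass1}. Set $m_n(g):=\mu(D_n\cap g^{-1}D^\circ)$. Since $D_n$ is a fundamental domain, $\sum_g m_n(g)=\mu(D)$, while the almost subadditivity axiom (7(b)) applied to the partition $\{g^{-1}D^\circ\}_g$ together with $G$-invariance gives
\[
\sum_{g\in G}\Per(gD_n,D^\circ)=\sum_{g\in G}\Per(D_n,g^{-1}D^\circ)\le c\,\Per(D_n).
\]
Combining this with the local isoperimetric inequality (axiom 8) and the fact that $f(v)/v\to+\infty$ as $v\to 0^+$ (because $f'(0)=+\infty$), the assumption that all $m_n(g)$ are arbitrarily small would force the left-hand side to be arbitrarily large, a contradiction. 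Hence some $m_n(g_n)\ge\delta$; replacing $D_n$ by $g_n D_n$ preserves $\Per(D_n)$ and gives $\mu(D_n\cap D^\circ)\ge\delta$ for all $n$. Now, using the compactness axiom (5) along an exhaustion of $X$ by finite unions of $G$-translates of $\overline D$ together with a diagonal extraction, I obtain $D_n\to E$ in $L^1_{\text{loc}}(X)$. Lower semicontinuity yields $\Per(E)\le \inf_{\mathcal D}\Per$, and $L^1_{\text{loc}}$-convergence transmits the disjoint-translates property $\mu(gE\cap E)=\lim_n\mu(gD_n\cap D_n)=0$ for every $g\ne id$, so by Lemma \ref{lemfd} and the subsequent corollary it suffices to check the volume identity $\mu(E)=\mu(D)$.

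The main obstacle is precisely this volume identity: a priori the inequality $\mu(E)\le\mu(D)$ could be strict, meaning that a positive fraction of the mass of $D_n$ escapes every compact set. To rule this out, I would apply the tightness argument of the previous step to the residual pieces $D_n\setminus K_R$ (which still enjoy the disjoint-translates property), obtaining a second concentration point which, after a $G$-translation sending it arbitrarily far from $E$ (possible because $X/G$ is compact while $G$ is infinite whenever $X$ is noncompact), gives a second accumulation set $E'$. The decay of the function $\phi$ in the almost subadditivity axiom (7(a)) then ensures that the competitor $E\cup gE'$, for $g$ sufficiently far, is an approximate fundamental domain whose perimeter is, in the limit of large separation, bounded by $\Per(E)+\Per(E')\le\liminf_n\Per(D_n)$. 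Iterating the procedure until the full mass $\mu(D)$ has been exhausted produces a genuine fundamental domain with perimeter no larger than $\inf_{\mathcal D}\Per$, hence a minimizer. The delicate technical point is the control of long-range interactions through axiom (7(a)), which is what makes the scheme work uniformly for both local and nonlocal perimeters.
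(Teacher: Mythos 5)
Your proposal follows the same concentration-compactness route as the paper, and the tightness step is sound: using axiom 7(b), the local isoperimetric inequality, and $f'(0)=+\infty$ to force a positive lower bound $\mu(g_nD_n\cap D^\circ)\ge\delta$ is exactly what drives the paper's Concentration Compactness Lemma \ref{cctheorem}. The gap is in the assembly step. You propose to translate the residual piece $E'$ far from $E$ via some $g\in G$, and to invoke the decay of $\phi$ in axiom 7(a) to bound $\Per(E\cup gE')$. Neither move is needed, and — more importantly — neither resolves the real issue, which is whether $E\cup gE'$ satisfies the disjoint-translates condition $\mu\big((E\cup gE')\cap h(E\cup gE')\big)=0$ for every $h\ne id$. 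Expanding this, you must check $\mu(E\cap kE')=0$ for \emph{every} $k\in G$, and this condition is invariant under replacing $E'$ by $gE'$; translating $E'$ far away buys nothing.

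The key observation, made explicitly in the paper's proof, is that the pieces $E^i=\lim_n(g_n^i)^{-1}D_n$, with $(g_n^i)^{-1}g_n^j\to\infty$, already satisfy the strong disjointness $\mu(E^i\cap gE^j)=0$ for every $g\in G$ and $i\ne j$: a positive overlap would, by $L^1_{\mathrm{loc}}$ convergence, force $\mu\big(D_n\cap g_n^i\,g\,(g_n^j)^{-1}D_n\big)>0$ for all large $n$, but $g_n^i\,g\,(g_n^j)^{-1}\to\infty$, which contradicts the fact that $D_n$ is a fundamental domain. Once this is established, $\widetilde D:=\cup_iE^i$ has the disjoint-translates property automatically, the volume identity $\sum_i\mu(E^i)=\mu(D)$ from the concentration-compactness lemma promotes $\widetilde D$ to a fundamental domain via the corollary following Lemma \ref{lemfd}, and the bound $\Per(\widetilde D)\le\sum_i\Per(E^i)$ comes from submodularity (axiom 6), not from any long-range estimate. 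The decay of $\phi$ in axiom 7(a) is indeed crucial, but at a different stage: it enters in the semicontinuity Lemma \ref{lsctheorem}, to prove $\sum_i\Per(E^i)\le\liminf_n\Per(D_n)$ by controlling the interaction between the perimeter contributions of $D_n$ concentrated near the diverging translates $g_n^i$.
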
 
The proof of this theorem is based on  two basic tools:  the lower semicontinuity of the perimeter and a concentration compactness lemma. 
These results are  generalizations of the analogous results in \cite{npst}. 

 For a sequence $g_k\in G$ we will write that $\lim_k g_k=+\infty$ if for every finite subset $F\subseteq G$, the set $\{g_k\in F\}$ is finite.  In particular, since $G$ acts properly discontinuously on $X$, we have that for any $K\subseteq X$ compact and for every $N>0$ there exists $k_N$ such that $d(g_k K, K)\ge N$ for all $k\geq k_N$. 

\begin{lemma}[Semicontinuity] \label{lsctheorem} Assume that $E_k\subseteq X$ is a sequence of measurable sets, $g_k^i\in G$ such that $(g_k^i)^{-1}\circ g_k^j\to +\infty$ if $j\neq i$ as $k\to +\infty$ and $(g_k^{i})^{-1}E_k\to E^i$ in $L^1_{\rm loc}$ as $k\to +\infty$. 
Then \[\sum_i \Per (E^i)\leq \liminf_k \Per(E_k). \]
\end{lemma}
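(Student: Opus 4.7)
The approach is the standard concentration-compactness style argument for partitioned lower semicontinuity: reduce to a finite sum of indices, transport each slice of $\Per(E_k)$ around the drifting orbit $g_k^i$ back to a fixed precompact set via invariance, dominate their sum by $\Per(E_k)$ up to a vanishing error coming from the almost-subadditivity defect, and then pass to the limit termwise using lower semicontinuity and the continuity property along exhausting open sets.

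First I would fix $N\in\N$ and an exhaustion of $X$ by precompact open sets $U_1\subset U_2\subset\cdots$ with $\bigcup_n U_n = X$. Since $(g_k^i)^{-1}\circ g_k^j\to\infty$ for $i\ne j$ and $G$ acts properly discontinuously, for each fixed $n$ and each pair $i\ne j\le N$ the translates $g_k^i U_n$ and $g_k^j U_n$ are eventually pairwise disjoint and $d(g_k^i U_n, g_k^j U_n)\to\infty$ as $k\to\infty$ (transport this fact from the discrete convergence statement recalled just before the lemma).

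Next, I apply the almost-subadditivity property (7a) to the disjoint family $\{g_k^i U_n\}_{i=1}^N$ and use monotonicity (property (2)) to bound $\Per(E_k, \bigcup_i g_k^i U_n)\le\Per(E_k)$, obtaining
\[
\sum_{i=1}^N \Per(E_k, g_k^i U_n)\;\le\;\Per(E_k)+\sum_{i=1}^N \mu(E_k\cap g_k^i U_n)\min_{\substack{j\ne i\\ j\le N}}\phi\bigl(d(g_k^i U_n,g_k^j U_n)\bigr).
\]
The defect on the right is $o(1)$ as $k\to\infty$ with $n$ and $N$ fixed, since $\mu(E_k\cap g_k^i U_n)\le \mu(U_n)<\infty$, the index range is finite, and $\phi(t)\to 0$ as $t\to\infty$. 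Invariance (property (4)) rewrites each term on the left as $\Per((g_k^i)^{-1}E_k, U_n)$, so using the assumed $L^1_{\rm loc}$ convergence $(g_k^i)^{-1}E_k\to E^i$ and lower semicontinuity (property (1)) termwise on the finite sum gives
\[
\sum_{i=1}^N \Per(E^i, U_n)\;\le\;\sum_{i=1}^N\liminf_k\Per((g_k^i)^{-1}E_k, U_n)\;\le\;\liminf_k \Per(E_k).
\]

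Finally, I would send $n\to\infty$ using continuity along $U_n\nearrow X$ (property (3)) to get $\Per(E^i,U_n)\to\Per(E^i)$, then let $N\to\infty$ to recover the full (possibly infinite) sum on the left. The only delicate point, which is not really an obstacle, is verifying that the subadditivity defect is negligible; but once $N$ and $n$ are frozen this is immediate from the finiteness of the index range, the precompactness of $U_n$, and the decay of $\phi$. The rest is a bookkeeping combination of the abstract properties listed for $\Per$.
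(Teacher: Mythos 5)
Your proposal is correct and follows essentially the same route as the paper: apply almost-subadditivity (7a) and monotonicity (2) to the eventually disjoint translated sets, pull each local perimeter back to a fixed precompact set by $G$-invariance, use lower semicontinuity termwise, and finally exhaust in the spatial variable. The one genuine (if minor) refinement in your version is that you first truncate the index set to a finite $N$, which makes the vanishing of the subadditivity defect and the passage from $\liminf_k \sum_i$ to $\sum_i \liminf_k$ unproblematic; the paper works with all indices at once and asserts that $d(g_k^i U, g_k^j U)\to\infty$ uniformly in $i\ne j$, which is less transparent when the index set is infinite and no a priori bound on $\mu(E_k)$ is assumed. Your additional limit $N\to\infty$ at the end then recovers the full countable sum by monotone convergence, so nothing is lost.
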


\begin{proof}
The proof follows along the same lines as in  \cite[Theorem 3]{npst}, the main difference being the use of the almost  subadditivity instead of subadditivity.  Let $x\in X$ and let $U=B_r(x)$, so by the assumption and the fact that $G$ acts properly discontinuously on $X$ we get that  for $k$ sufficiently large $g^i_k U\cap g^j_k U=\emptyset$ for $i\neq j$, and $dist(g_k^i U, g_k^j U)\to +\infty$ as $k\to +\infty$, uniformly in $i\ne j$.  
Using the property of the perimeter functional, we get
\begin{eqnarray*} 
\liminf_k \Per(E_k)&\geq&  \liminf_k \Per(E_k, \cup_i g_k^i U) \\ & \geq &
 \liminf_k \left[\sum_i \Per(E_k, g_k^i U) - \sum_{i} \mu(E_k\cap g_k^i U)  \min_{j\neq i}\phi(dist(g_k^i U, g_k^j U))\right]
\\&\geq & \sum_{i} \liminf_k \Per(E_k, g_k^i U)- \sum_{i} \limsup_k \mu(E_k\cap g_k^i U)  \min_{j\neq i}\phi(dist(g_k^i U, g_k^j U))\\
&=& \sum_i \liminf_k \Per ((g_k^1)^{-1}E_k, U)\geq \sum_i \Per(E^i, U),  
\end{eqnarray*} 
from which we obtain the thesis by sending $r\to +\infty$. 
\end{proof} 

\begin{lemma}[Concentration compactness]  \label{cctheorem} 
Assume that $E_k\subseteq X$ is a sequence of measurable sets, with $|E_k|=m$ and $\sup_k \Per(E_k)\leq C<+\infty$.

Then there exist a subsequence $E_k$, $g_k^i\in G$  for $i\in I\subseteq \N$ and $E^i\subseteq X$ measurable sets  such that 
 $(g_k^i)^{-1}\circ g_k^j\to +\infty$ if $j\neq i$ as $k\to +\infty$,  $(g_k^{i})^{-1}E_k\to E^i$ locally in $L^1$ as $k\to +\infty$  and $\sum_i |E^i|=m$. 
\end{lemma}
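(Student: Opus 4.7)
The plan is to iteratively extract concentration profiles by tracking how the mass of $E_k$ distributes across the tiles $\{gD : g \in G\}$ of the precompact fundamental domain $D$ from Assumption~\ref{ass1}. Set $f_k(g) := |E_k \cap gD|$, so that $\sum_{g \in G} f_k(g) = m$ for every $k$. I would first extract the leading profile by setting $\lambda_1 := \limsup_k \sup_g f_k(g)$ and claiming $\lambda_1 > 0$ whenever $m > 0$. Indeed, if not, then for every $\eta > 0$ and $k$ sufficiently large one has simultaneously $\sup_g f_k(g) \le \varepsilon$ (so the local isoperimetric inequality~(8) applies on each tile, using $G$-invariance~(4)) and $f(t) \ge \eta t$ on $[0, \sup_g f_k(g)]$ (using $f'(0) = +\infty$). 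Summing $\Per(E_k, (gD)^\circ) \ge f(f_k(g))$ over $g$ and invoking almost-subadditivity~(7b) then gives
\[
cC \;\ge\; \sum_g f(f_k(g)) \;\ge\; \eta \sum_g f_k(g) \;=\; \eta\, m,
\]
contradicting the arbitrariness of $\eta$. Hence $\lambda_1 > 0$; picking $g_k^1 \in G$ with $f_k(g_k^1) \ge \lambda_1/2$, invariance preserves the perimeter bound, and the compactness property~(5), combined with a diagonal extraction over a precompact exhaustion of $X$, furnishes a subsequence along which $(g_k^1)^{-1} E_k \to E^1$ in $L^1_{\text{loc}}$ with $|E^1| \ge \lambda_1/2$.

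For the inductive step, suppose profiles $E^1, \ldots, E^N$ and translations $g_k^1, \ldots, g_k^N$ have been constructed and set $m_N := \sum_{i \le N} |E^i| \le m$, the bound following from Fatou applied on compact balls. If $m_N = m$ we stop. Otherwise, for each $R>0$ let $H_k^R \subset G$ consist of those $g$ for which $gD \cap g_k^i \overline{B_R} \ne \emptyset$ for some $i \le N$. Because $(g_k^i)^{-1} g_k^j \to \infty$ for $i \ne j$ and $G$ acts properly discontinuously, the translates $\{g_k^i \overline{B_R}\}_{i \le N}$ are eventually pairwise disjoint, so $L^1_{\text{loc}}$ convergence yields
\[
\lim_k \Big|\bigcup_{g \in H_k^R} E_k \cap gD\Big| \;=\; \sum_{i \le N} |E^i \cap \overline{B_R}| \;\xrightarrow[R\to\infty]{}\; m_N.
\]
Thus the residual sets $E_k^R := E_k \setminus \bigcup_{g \in H_k^R} gD$ carry asymptotic mass $m - m_N > 0$ and, by submodularity~(6), have uniformly bounded perimeter. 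Reapplying the first-profile argument to $E_k^R$ produces $g_k^{N+1}$ satisfying $(g_k^{N+1})^{-1} g_k^i \to \infty$ for all $i \le N$ and a new profile $E^{N+1}$ of positive mass. A diagonal extraction in $N$ and $R$ returns a single subsequence of $(E_k)$ along which all the profiles appear simultaneously.

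The main obstacle will be the mass-conservation identity $\sum_i |E^i| = m$: one must preclude the scenario in which the residual sequence carries positive asymptotic mass but refuses to concentrate on any tile. This is exactly what the local isoperimetric inequality~(8) rules out, since it forces any too-diffuse configuration to have perimeter tending to $+\infty$, contradicting $\sup_k \Per(E_k) \le C$. The remaining technicality is organising the diagonal extraction so that a single subsequence of $(E_k)$ serves all countably many profiles; this is routine once the $R$-truncations of the residual sequences are handled compatibly.
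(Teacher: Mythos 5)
Your proposal takes a genuinely different route from the paper's proof. The paper does not extract profiles one at a time: it enumerates all tiles $h_k^i B^\circ$ of the fundamental domain sorted so that $i\mapsto |E_k\cap h_k^i B^\circ|$ is nonincreasing, extracts the limits $F^i$ of $(h_k^i)^{-1}E_k\cap B^\circ$ simultaneously, and then proves a uniform-in-$k$ tail estimate $\lim_{n}\sup_k\sum_{i\geq n}|E_k\cap h_k^i B^\circ|=0$ by combining the local isoperimetric inequality with almost-subadditivity applied directly to $E_k$. The profiles $E^i$ are then produced afterwards, by grouping the $h_k^i$ into equivalence classes according to whether $(h_k^i)^{-1}h_k^j$ stays bounded. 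Your scheme is the more classical Lions-type iterative extraction, which is a legitimate alternative and uses the same two key ingredients (local isoperimetric inequality plus almost-subadditivity to prevent vanishing, compactness to extract limits); the paper's sorted-tiles strategy is cleaner precisely because it sidesteps the residual sequences and the double diagonal over $N$ and $R$ that your version requires.

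There is, however, a genuine gap in the inductive step. You assert that the residual sets $E_k^R := E_k\setminus\bigcup_{g\in H_k^R}gD$ ``have uniformly bounded perimeter by submodularity.'' Submodularity (property (6)) only gives $\Per(E_k\cap A)\leq \Per(E_k)+\Per(A)$ for $A=X\setminus\bigcup_{g\in H_k^R}gD$, and $\Per(A)$ is not bounded uniformly in $R$: it grows with the number of tiles in $H_k^R$, i.e.\ with $R$. Moreover the general framework does not assume $\Per(A)=\Per(X\setminus A)$, so even writing down $\Per(A)$ is problematic. Since you then ``reapply the first-profile argument to $E_k^R$,'' both the vanishing-prevention estimate and the compactness extraction inherit this unjustified bound. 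The fix is to avoid ever taking the perimeter of $E_k^R$ as a set: instead apply almost-subadditivity (7b) to $E_k$ itself, obtaining $\sum_{g}\Per(E_k,(gD)^\circ)\leq c\,\Per(E_k)\leq cC$, then restrict the sum to $g\notin H_k^R$, noting that $E_k\cap gD=E_k^R\cap gD$ for those $g$; and for compactness apply property (5) to $(g_k^{N+1})^{-1}E_k$ rather than to the residual. This recovers exactly the mechanism the paper exploits. With that repair your argument becomes sound, but as written the submodularity claim is a step that fails.
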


\begin{proof} 
The proof is an adaptation of the proof of Theorem 3.3 in \cite{npst}. 

First of all we consider $h_k^i$ an enumeration of $G$ such that $i\mapsto |E_k\cap h_k^i B^\circ|$ is nonincreasing, where $B^\circ$ is  the interior of a fundamental domain which satisfies Assumption \ref{ass1} (see condition (7) in the definition of the perimeter functional).  By the compactness property of the perimeter we have that up to a subsequence $(h_k^{i})^{-1}E_k
\cap B^\circ \to F^i$ in $L^1 $ as $k\to +\infty$. 

Moreover, since $\mu(h_k^iB^\circ\cap h_k^j B^\circ)=0$ if $i\neq j$, we have that 
\[|E_k\cap h_k^n B^\circ|\leq \frac{1}{n} \sum_{i=1}^n  |E_k\cap h_k^i B^\circ|\leq \frac{1}{n}|E_k|=\frac{m}{n}
\qquad \text{and} \qquad \sum_{n=1}^{+\infty}  |E_k\cap h_k^n B^\circ|= |E_k|=m. \]

Recalling condition (7), for every $\delta>0$ there exists $\eta_0$ such that $r\leq \delta f(r) $ for all $r\in [0, \eta_0]$. Without loss of generality 
 we may  choose $\eta_0<\eps$, where $\eps$ is again as in assumption 7. So, for every $\delta>0$, 
we have that there exists $\bar n=\bar n(\eta_0)$  for which  
 $\mu(E_k\cap h_k^n B^\circ)\leq \delta  f(\mu(E_k\cap h_k^n B^\circ))$, and  $\mu(E_k\cap h_k^n B^\circ)\leq  \eta_0\leq \eps$ for all $n\geq \bar n$ and for all $k$.
In particular by the local isoperimetric inequality  and the almost subadditivity property we have that   
\begin{eqnarray*} && \sum_{n=\bar n}^{+\infty} \mu(E_k\cap h_k^n B^\circ) =\sum_{n=\bar n}^{+\infty} \frac{\mu(E_k\cap h_k^n B^\circ)}{ f(\mu(E_k\cap h_k^n B^\circ)} f(\mu(E_k\cap h_k^n B^\circ)) \\ & \leq& \delta \sum_{n=\bar n}^{+\infty} f(\mu(E_k\cap h_k^n B^\circ))\leq \delta \sum_{n=\bar n}^{+\infty} \Per((h_k^{n})^{-1}E_k, B^\circ) \\ & \leq &  \delta \sum_{n=1}^{+\infty} \    \Per(E_k, h_k^n B^\circ)  
 \leq \delta c \Per(E_k)\leq \delta c C .
\end{eqnarray*} 
Since $\delta>0$ was arbitrary, we get that  \[\lim_{ n\to +\infty} \sup_k \sum_{i=n}^{+\infty}  \mu(E_k\cap h_k^i B^\circ)=0.\]
On the other hand, by construction we have that
\[ \lim_{k\to +\infty}  \sum_{n=1}^{+\infty}   \mu(E_k\cap h_k^n B^\circ)=m. \]
This is sufficient to conclude that   $\sum_{n=1}^{+\infty} |F^n|=m$. Indeed, given $\eps>0$ we have that for  $n\geq n(\eps)$   and for every $k$ there holds 
\[  m-\eps\leq \sum_{i=1}^{n}   \mu(E_k\cap h_k^n B^\circ)=\sum_{i=1}^{n}   \mu((h_k^{n})^{-1}E_k\cap  B^\circ)\leq m,\] 
and we conclude by sending $k\to +\infty$ and then $n\to +\infty$ (see also \cite[Lemma B1]{npst}). 

Now, we define an equivalence relation $j\sim i$ if the set $\{(h_k^i)^{-1}\circ h_k^j\}$ is finite. Let $I$ be the quotient set, let $[i]\in I$ an equivalence class and denote by $\underline i=\min \{i\in [i]\}$. We define $g_k^{i}:= h_k^{\underline i}$.  Up to passing to a subsequence in $k$, we may assume that for all $i\in [i]$,  $(h_k^{\underline i})^{-1}\circ h_k^{i}$  is constant $=h^i$.
So, $h_k^{i'} =g_k^{i}\circ h^{i'}$ and by construction $(g_k^i)^{-1}g_k^j\to +\infty$ as $k\to +\infty$. 
We have that, up to a subsequence, $(g_k^{i})^{-1}E_k \to E^i$ in $L^1_{\text{loc}} $ as $k\to +\infty$.  
  
By construction, if $i'\in [i]$ we have that 
\[
\mu(F^{i'})=\lim_k\mu((h_k^{i'})^{-1}E_k \cap B^\circ)= \mu(E^i\cap (h^{i'})^{-1} B^\circ).
\] 
Recalling that $B^0$ is a fundamental domain, 
  \[ \mu(E^i)\geq \sum_{i'\in [i]}   \mu(E^i\cap (h^{i'})^{-1} B^\circ) = \sum_{i'\in [i]}  \mu (F^{i'}).\]
  This implies that $\sum_{i\in I} \mu(E^i)=m$. 
  \end{proof}

 \begin{proof}[Proof or Theorem \ref{isothm}] If $\Per(D)=+\infty$ for every fundamental domain $D$ in $X$, 
 then there is nothing to prove.\\ 
 Assume that $\inf\{ \Per(D)\ | \  D\in\mathcal{D}\}<+\infty$ and let $E_k$ be a sequence of fundamental domains such that 
$$
\liminf_k \Per(E_k)=  \inf\{ \Per(D)\ | \   D\in\mathcal{D} \}.
$$ 
By  Lemma \ref{cctheorem}, for $i\in I\subseteq \N$  there exist $g_k^i\in G$ and $E^i\subseteq X$  such that 
 $(g_k^i)^{-}\circ g_k^j\to +\infty$ if $j\neq i$ as $k\to +\infty$,  $(g_k^{i})^{-1}E_k\to E^i$ locally in $L^1$ as $k\to +\infty$  and $\sum_i |E^i|=m$. 
 Moreover, by Lemma \ref{lsctheorem}  we have that
 \[\sum_i \Per (E^i)\leq \liminf_k \Per(E_k)=  \inf\{ \Per(D)\ | \ D\in \mathcal D\}.  \]
 
 We claim that $\mu(E^i\cap E^j)=0$ for all $i\neq j$. Assume by contradiction it is not the case. Then $\mu(E^i\cap E^j)>0$. 
  By $L^1$ convergence we have that $\mu ((g_k^{i})^{-1}E_k\cap (g_k^{j})^{-1}E_k) >0$  for every $k$ sufficiently large, which is in contradiction with the fact that $E_k$ is a fundamental domain. 
 In the same way we may show that $\mu(E^i\cap gE^j)=0$ for all $g\in G$, and then $\mu(g(\cup_i E^i) \cap (\cup_i E^i))=0$.
 
Finally, the fact that $\mu(\cup_i E^i)=m=\mu (E_k)$ and the fact that  $\mu(g(\cup_i E^i) \cap (\cup_i E^i))=0$ implies that $\mu(X\setminus G( \cup_i E^i ))=0$, so that $\widetilde D:=\cup_i E^i$ is a fundamental domain. By the submodularity property, we then conclude that
\[
\Per(\widetilde D)\leq \sum_i \Per (E^i)\leq \liminf_k \Per(E_k)=  \inf\{ \Per(D)\ | \  D\in \mathcal D\}, 
\]
which gives that $ \Per(\widetilde D)=\sum_i \Per (E^i)=  \inf\{ \Per(D)\ | \  D\in \mathcal D\}$. 
\end{proof}  

\begin{remark}\upshape 
Note that, by the proof of  Theorem   \ref{isothm}, if $D$ is an isoperimetric fundamental domain with $\Per(D)<+\infty$, then $D=\cup_i E_i$ 
with $\mu(E_i\cap E_j)=0$ and $\Per(D)= \sum_i \Per (E^i)$. Therefore, if $\Per$ is a nonlocal perimeter as in \eqref{frac} we get by \eqref{unione}  that necessarily $D=E^i$ for some index $i$, and $\mu(E^j)=0$ for all $j\neq i$.
\end{remark}

\section{Minimal partitions in $\R^n$}\label{secloc}

We now consider the particular case $X=\R^n$, equipped with the Lebsgue measure and with the Euclidean distance, and we fix $G= \Z^n$.

As above, every fundamental domain $D$  generates a  $G$-periodic partition of $\R^n$ into regions  $gD$ for $g\in G$, which have all the same volume. 
We observe that, if $r<\frac12$, then $g B_r(x)\cap B_r(x) =\emptyset$ for all $g\in G\setminus\{0\}$ and for all $x\in\R^n$. 

We shall also assume that $\Per$ is either the local anisotropic perimeter, induced by a $G$-periodic family of norms $\phi$ on $\R^n\times \R^n$ (see Paragraph \ref{exper}) or  the nonlocal fractional  perimeter (see Paragraph  \ref{experfr}). 
  
In the case $\Per$  is the anisotropic perimeter, all the results  can be easily extended to the case when $X$ is the universal covering of a closed Riemannian manifold $M$ (see Example 1 in Paragraph \ref{exper}) and $r$ is smaller than the injectivity radius of $M$, with respect to the distance induced by $\p$.
  
\begin{proposition}\label{partmin}
Let $D$ be an isoperimetric fundamental domain. Then $D$ generates a $G$-periodic partition of $\R^n$ which is $(\Lambda,r)$-minimal for every $r<\frac12$,
with $\Lambda = 0$ in the case of the local perimeter, and $\Lambda=\int_{\R^n\setminus B_{1-2r}(0)} K(h) dh,$ in the case of the nonlocal perimeter.
\end{proposition}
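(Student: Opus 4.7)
The plan is to reduce $(\Lambda,r)$-minimality of the partition $\{E_g\}_{g\in G}=\{gD\}_{g\in G}$ to the isoperimetric minimality of $D$, by converting an arbitrary local competitor into a periodized competitor that is itself a fundamental domain. Given $\{F_g\}_{g\in G}$ with $E_g\Delta F_g\Subset B_r(x)$ for all $g$, set $U:=\bigcup_{g\in G}gB_r(x)$; since $r<\tfrac12$ and $G=\Z^n$ acts by unit translations, the $gB_r(x)$ are pairwise disjoint with mutual distance at least $1-2r$. I define
\[
\tilde D\;:=\;(D\setminus U)\;\cup\;\bigcup_{g\in G}g\bigl(F_{g^{-1}}\cap B_r(x)\bigr).
\]
Using that $D$ and $\{F_g\}$ are partitions and $G$ is abelian, a direct verification shows that $\tilde D$ is a fundamental domain and that the $G$-periodic partition it generates satisfies $h\tilde D\cap B_r(x)=F_h\cap B_r(x)$ (mod null) for every $h\in G$. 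Isoperimetric minimality of $D$ then gives $\Per(D)\le\Per(\tilde D)$, and the task is to convert this into the required comparison of partition perimeters.

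For the local (anisotropic) perimeter the argument is immediate: $\Per$ is a Radon measure, so by almost subadditivity with $\phi\equiv 0$,
\[
\Per(D,U)\;=\;\sum_g\Per(D,gB_r(x))\;=\;\sum_g\Per(gD,B_r(x))\;=\;\sum_g\Per(E_g,B_r(x)),
\]
and analogously $\Per(\tilde D,U)=\sum_g\Per(F_g,B_r(x))$, using locality together with $h\tilde D\cap B_r(x)=F_h\cap B_r(x)$. Since $\tilde D=D$ on $\R^n\setminus U$, after a negligible perturbation of $r$ ensuring $\mathcal H^{n-1}(\partial^*D\cap\partial U)=0$, the contributions on $\R^n\setminus U$ cancel in $\Per(D)\le\Per(\tilde D)$ and we conclude $\sum_g\Per(E_g,B_r(x))\le\sum_g\Per(F_g,B_r(x))$, i.e., $(0,r)$-minimality.

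For the nonlocal perimeter the same strategy applies but with a cross-interaction error. Applying the almost subadditivity (7)(a) to the disjoint family $\{gB_r(x)\}_g$, whose mutual distances exceed $1-2r$ so that $\phi(1-2r)=\Lambda$, yields for any measurable $E$
\[
\Per(E,U)\;\le\;\sum_g\Per(E,gB_r(x))\;\le\;\Per(E,U)+\Lambda\sum_g\mu(E\cap gB_r(x)).
\]
Applying this twice (for $E=D$ and $E=\tilde D$), combining with $\Per(D)\le\Per(\tilde D)$, and using the coincidence $h\tilde D\cap B_r(x)=F_h\cap B_r(x)$, one reduces the proof to bounding the discrepancy $\sum_g[\Per(g\tilde D,B_r(x))-\Per(F_g,B_r(x))]$ by $\Lambda\sum_g\mu(E_g\Delta F_g)$: this discrepancy arises precisely because $g\tilde D$ and $F_g$ agree inside $B_r(x)$ but differ outside.

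The main obstacle is this last bookkeeping. In each distant ball $hB_r(x)$ ($h\ne e$) one has $(g\tilde D\Delta F_g)\cap hB_r(x)=h(E_{h^{-1}g}\Delta F_{h^{-1}g})$, so a naive bound summed over $g$ diverges. The cure is that both $\{F_g\}$ and $\{g\tilde D\}$ partition $\R^n$, hence $\sum_g(\chi_{F_g}-\chi_{g\tilde D})\equiv 0$ outside $B_r(x)$; this cancellation, together with the fact that every point of $B_r(x)$ is at distance at least $1-2r$ from any other translate $hB_r(x)$—so that $\int_{\bigcup_{h\ne e}hB_r(x)}K(x'-y)\,dy\le\Lambda$ for any $x'\in B_r(x)$—collapses the residual to exactly $\Lambda\sum_g\mu(E_g\Delta F_g)$, as required.
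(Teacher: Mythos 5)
Your approach is the same as the paper's: build a periodized fundamental domain $\tilde D$ out of the local competitor $\{F_g\}$, invoke isoperimetric minimality $\Per(D)\le\Per(\tilde D)$, localize to $U=\bigcup_g gB_r(x)$, and then decompose the difference over the disjoint balls using almost-subadditivity. (Your formula for $\tilde D$, intersecting with $B_r(x)$ before translating, is in fact the correct one; the paper's displayed $\widehat D$ omits this intersection, but this is clearly an abuse of notation.) For the local perimeter your argument is complete.

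For the nonlocal case, you correctly isolate the genuine subtlety that the paper's proof treats as an identity: the last displayed step in the paper reads as $\Per(\widehat D,g^{-1}B)=\Per(E_g,B)$, but for the fractional perimeter $\Per(\cdot,B)$ depends on the set globally, and $g\widehat D$ and the competitor agree only on $B$, not outside. Flagging this is a real observation; it is a point the paper glosses over. However, your resolution is only a sketch and the key step is asserted rather than proved. The claim that the cancellation $\sum_g(\chi_{F_g}-\chi_{g\tilde D})\equiv 0$, together with the pointwise bound $\int_{\bigcup_{h\neq e}hB_r(x)}K(x'-y)\,dy\le\Lambda$, ``collapses the residual to exactly $\Lambda\sum_g\mu(E_g\Delta F_g)$'' is not obvious: the naive bound summed over $h$ still diverges, since for each $h\neq e$ the difference set $(g\tilde D\Delta F_g)\cap hB_r(x)$ is a translate of $(kD\Delta F_k)\cap B_r(x)$ for $k=h^{-1}g$. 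What actually makes the sum converge is not the linear cancellation you cite (which is used to reduce $\sum_g(2\chi_{F_g}-1)(\chi_{F_g}-\chi_{g\tilde D})$ to $2\sum_g\chi_{F_g}(\chi_{F_g}-\chi_{g\tilde D})$) but the subsequent change of variables: after fixing $k$ and summing over $g\neq k$, the domains $(F_g\cap B_r(x))-z-(g-k)$, $z\in B_r(x)$ fixed, are \emph{pairwise disjoint} translates of subsets of $B_r(x)$ all lying outside $B_{1-2r}$, so their union contributes at most $\Lambda$ in total. This disjointness is the essential ingredient and is missing from your write-up; with it, one obtains a bound of the form $C\Lambda\sum_g\mu(E_g\Delta F_g)$ for a dimensional constant $C$ (I get $C=2$ from this piece, plus another $2$ from the almost-subadditivity error, for a total of $4\Lambda$ rather than $\Lambda$), which is all that is needed in the subsequent sections of the paper.
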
 

\begin{proof}
Let us fix a ball $B$ of radius $r<1/2$, and let $\{E_g\}_{g\in G}$ be a partition of $\R^n$ such that $E_g\Delta gD\Subset B$ for all $g\in G$.
We now define 
\[
\widehat D:= \left( D\setminus  \bigcup_{g\in G} (g^{-1}B)\right)\cup \bigcup_{g\in G} (g^{-1}E_g).
\]
Note that  $\widehat D$ is also a fundamental domain and so from  the minimality of $D$ we get
\begin{eqnarray*}
0&\le& \Per(\widehat D)-\Per(D) = \Per(\widehat D, \cup_{g\in G} (g^{-1}B))-\Per(D, \cup_{g\in G} (g^{-1}B))
\\
&\le& \sum_{g\in G} \left[\Per(\widehat D, g^{-1}B)-\Per(D, g^{-1}B)\right]
+ \phi(1-2r) \sum_{g\in G} |( D\Delta \widehat D)\cap g^{-1}B|
\\
&=& \sum_{g\in G} \Per(E_g,B)-\Per(gD,B) + \phi(1-2r) \sum_{g\in G}|( gD\Delta E_g)\cap B|,
\end{eqnarray*}
where $\phi(t)\equiv 0$ if $\Per$ is an anisotropic perimeter, and 
\[
\phi(t)=\int_{\R^n\setminus B_t(0)} K(h) dh,
\]
if $\Per$ is a nonlocal perimeter.
\end{proof}

\subsection{Elimination Lemma and conical partitions} \ \ \\
We provide some  preliminary results  in order to get the local finiteness of $(\Lambda, r)$-minimal partitions. 
First of all we get an a priori estimate on the  perimeter of a $(\Lambda, r)$-minimal partition, then we state an  Elimination Lemma,  and we apply it, with a dimension reduction argument, to show that conical partitions are finite. 
 
 In all these results we will assume that $\Per$ is either the local anisotropic perimeter, 
 or the fractional $s$-perimeter induced by the kernel 
  \[
  K_s(h) = \frac{1}{|h|^{n+s}} \quad s\in (0,1).
  \] 
  
 \begin{lemma} \label{estimate} 
  Assume that $\Per$ is either the local anisotropic perimeter or the fractional perimeter.
 Let $(E_i)_{i\in \N}$ be a $(\Lambda,r)$-minimal partition for  some $r<\frac12$.

Then, there exists $C_0>0$ depending on $n,s,\Lambda$ such that 
\[\sum_i \Per(E_i\cap B_r(x))\leq C_0 r^{k}\qquad\text{and }\qquad \sum_i \Per(E_i, B_r(x))\leq C_0 r^{k}, 
\] 
where $k=n-1$ in the case of the local perimeter, and $k=n-s$ in the case of the fractional perimeter.
 \end{lemma}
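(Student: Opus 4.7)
The plan is to use a standard competitor argument, absorbing a ball into a single cell of the partition. Fix $x \in \R^n$ and $\rho \in (0, r)$; in the local case we further select $\rho$ so that $\mathcal{H}^{n-1}(\partial^* E_i \cap \partial B_\rho(x)) = 0$ for every $i$, which holds for a.e. $\rho$ thanks to $\sum_i \Per(E_i, B_r(x)) < +\infty$ (part of the definition of $(\Lambda, r)$-minimality). Define the competitor $F_1 := E_1 \cup B_\rho(x)$ and $F_i := E_i \setminus B_\rho(x)$ for $i \ge 2$; then $\{F_i\}_i$ is a partition of $\R^n$ with $E_i \Delta F_i \Subset B_r(x)$ and $\sum_i |E_i \Delta F_i| \le 2|B_\rho|$.

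The central step is to establish
\[
\sum_i \Per(F_i, B_r(x)) \;\le\; \sum_i \Per(E_i, B_r(x)) - \sum_i \Per(E_i, B_\rho(x)) + C\,\Per(B_\rho),
\]
for a universal constant $C$, where $\Per(B_\rho) = O(\rho^{n-1})$ in the local case and $\Per(B_\rho) = O(\rho^{n-s})$ in the fractional case by scaling. For the local (anisotropic) perimeter this is straightforward: $F_i$ has no reduced boundary in the open ball $B_\rho$, while on $\partial B_\rho$ the sets $\partial^* F_i \cap \partial B_\rho$ form a partition of $\partial B_\rho$; combined with the additivity of the perimeter across $\partial B_\rho$ (guaranteed by our choice of $\rho$), this yields the inequality. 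For the fractional perimeter one uses the bilinear representation
\[
\sum_i \Per(E_i, U) = \int_{U \times U} \mathbf{1}^E_{\mathrm{diff}}(y,z)\, K(y-z)\, dy\,dz + 2 \int_{U \times (\R^n \setminus U)} \mathbf{1}^E_{\mathrm{diff}}(y,z)\, K(y-z)\, dy\,dz,
\]
where $\mathbf{1}^E_{\mathrm{diff}}(y,z) = 1$ iff $y$ and $z$ lie in distinct cells; since $\mathbf{1}^F_{\mathrm{diff}} \equiv 0$ on $B_\rho \times B_\rho$ and $0 \le \mathbf{1}^F_{\mathrm{diff}} \le 1$ elsewhere, term-by-term comparison together with the identity $\int_{B_\rho} \int_{\R^n \setminus B_\rho} K(y-z)\,dy\,dz = \Per(B_\rho)$ yields the estimate.

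Invoking the $(\Lambda, r)$-minimality of $\{E_i\}$ against $\{F_i\}$ gives $\sum_i \Per(E_i, B_r(x)) \le \sum_i \Per(F_i, B_r(x)) + 2\Lambda |B_\rho|$, and combining with the central estimate yields, after rearrangement,
\[
\sum_i \Per(E_i, B_\rho(x)) \le C \Per(B_\rho) + 2\Lambda |B_\rho| \le C_0 \rho^k,
\]
since $\rho \le r < 1/2$ and the lower-order term $\Lambda|B_\rho| = O(\rho^n)$ is absorbed into $O(\rho^k)$. Letting $\rho \to r^-$ along admissible values and using the continuity property (3) of the perimeter produces $\sum_i \Per(E_i, B_r(x)) \le C_0 r^k$. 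For the first claimed bound, we estimate $\Per(E_i \cap B_r(x)) \le \Per(E_i, B_r(x)) + q_i$, where $q_i$ accounts for the newly created interface on $\partial B_r(x) \cap E_i$ in the local case, or for the interaction $\int_{E_i \cap B_r(x)} \int_{E_i \setminus B_r(x)} K(y-z)\,dy\,dz$ in the fractional case; since $\{E_i\}$ is a partition we have $\sum_i q_i \le \Per(B_r(x)) = O(r^k)$, so that $\sum_i \Per(E_i \cap B_r(x)) \le 2 C_0 r^k$.

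The main difficulty lies in the fractional case: the clean decomposition $\Per(E_i, B_r) = \Per(E_i, B_\rho) + \Per(E_i, B_r \setminus \overline{B_\rho})$ available for the local perimeter fails because of the long-range interactions through the kernel $K$, so one must work directly with the bilinear representation and carefully track the contributions coming from pairs $(y,z)$ with one variable in $B_\rho$ and the other outside.
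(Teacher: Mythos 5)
Your proof is correct and follows essentially the same route as the paper: both take the competitor $F_1 = E_1 \cup B$, $F_i = E_i\setminus B$ for $i>1$, test $(\Lambda,r)$-minimality, and absorb the extra cost into $\Per(B) = O(r^k)$. The only difference is bookkeeping — you organize the fractional-case computation via the symmetric difference-indicator $\mathbf{1}^E_{\mathrm{diff}}$, while the paper expands each term directly as sums of $J(A,B) = \int_A\int_B K$ — and your use of an inner radius $\rho < r$ is a slightly more careful handling of the compact-containment requirement in the definition of $(\Lambda,r)$-minimality, which the paper glosses over.
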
 
 
 \begin{proof} For the local case we refer to \cite[Theorem 1]{mt}  and for the fractional case to \cite[Corollary 3.6]{cm}. 
 We sketch the proof in the latter case. 
Let $\{F_i\}_{i\in \mathbb N}$ be the partition defined as 
\[
F_i:=\left\{
\begin{array}{ll}
E_1\cup B_r(x)& \text{if $i=1$}
\\
E_i\setminus B_r(x)& \text{if $i\neq 1 $.}
\end{array}
\right.\]
so that $E_i\Delta F_i\subset B_r(x)$ for all $i$. Then,
by Proposition \ref{partmin} we get that
\begin{eqnarray}\label{stima1}
& \Lambda \sum_{i\neq 1} |E_i\cap B_r(x)|+\Lambda |B_r(x)|=2\Lambda\omega_n r^n \ge  \sum_i \Per(E_i,B_r(x)) - \Per(F_i,B_r(x)) 
\\
= &\nonumber
\Per(E_1,B_r(x)) - \Per(E_1\cup B_r(x), B_r(x)) + \sum_{i>1}  \Per(E_i,B_r(x)) - \Per(E_i\setminus B_r(x) ,B_r(x)).
\end{eqnarray}

Let us denote $J(A,B)=\int_A\int_B K(x-y)dxdy$ and $B_r=B_r(x)$, and  we observe that 
\begin{eqnarray*}\Per(E_1,B_r) & =&\Per(E_1\cap B_r) -J(E_1\cap B_r, E_1\setminus B_r)+J( B_r \setminus E_1, E_1\setminus B_r ) \\ 
  \Per(E_1\cup  B_r  ,B_r)&=& J(B_r, \R^n\setminus (E_1\cup B_r)=\Per(B_r)-J( B_r, E_1\setminus B_r)\leq \Per(B_r) \\ \Per(E_1,B_r) - \Per(E_1\cup B_r , B_r) &\geq &\Per(E_1,B_r)-\Per(B_r)\\ 
\Per(E_1,B_r) - \Per(E_1\cup B_r , B_r) &=&
\Per(E_1\cap B_r)- \Per(B_r)+2J( B_r\setminus E_1, E_1\setminus B_r)\\ &\geq  &
\Per(E_1\cap B_r)- \Per(B_r)
\end{eqnarray*} 
and that
\begin{eqnarray*}\Per(E_i,B_r) & =&\Per(E_i\cap B_r)) -J(E_i\cap B_r, E_i\setminus B_r)+J( B_r\setminus E_i, E_i\setminus B_r) \\ 
  \Per(E_i\setminus  B_r ,B_r)&=& J(B_r, E_i\setminus B_r)  \\\sum_{i>1} \Per(E_i,B_r) - \Per(E_i\setminus B_r, B_r) &=&\sum_{i>1} 
  \Per(E_i,B_r)- J(B_r, \cup_i E_i\setminus B_r)\\ &\geq& \sum_{i>1} \Per(E_i,B_r)-\Per(B_r) \\ 
\sum_{i>1} \Per(E_i,B_r) - \Per(E_i\setminus B_r, B_r) &=&
\sum_{i>1}\Per(E_i\cap B_r)-2J(E_i\cap B_r, E_i\setminus B_r)\\  &\geq & \sum_{i>1}  \Per(E_i\cap B_r)-2J( B_r, \cup_i E_i\setminus B_r)\\ 
&\geq & \sum_{i>1}  \Per(E_i\cap B_r)-2\Per(B_r).
\end{eqnarray*} 
 Substituting in \eqref{stima1} and recalling that $\Per(B_r)= c_0 r^{n-s}$  for a constant $c_0$ depending on $n,s$, we get
 \[2\Lambda\omega_n r^n \ge \sum_i\Per( E_i\cap B_r)-3c_0 r^{n-s}\qquad  2\Lambda\omega_n r^n \ge \sum_i\Per( E_i, B_r)-2c_0 r^{n-s}\]
 from which we obtain the thesis. 
 \end{proof} 

We recall  an iteration lemma, whose proof can be easily obtained  by induction.

\begin{lemma}[De Giorgi iteration lemma]\label{lemmadg} 
Let $\alpha\in (0,1)$, $M>0$, $L>1$ and $u_k$ a decreasing sequence of positive numbers such that 
\[u_{k+1}^{1-\alpha}\leq L^k M u_k\qquad \text{and}\qquad u_0\leq \frac{1}{M^{\frac{1}{\alpha}}L^{\frac{1-\alpha}{\alpha^2}}}\] then $u_k\to 0$.
\end{lemma}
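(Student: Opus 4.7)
The plan is a standard De Giorgi super-linear iteration. The first step is to rewrite the recursion $u_{k+1}^{1-\alpha}\le L^k M u_k$ in a more convenient ``super-linear'' form by raising both sides to the power $1/(1-\alpha)$. Setting $\beta:=\alpha/(1-\alpha)>0$, $\tilde L:=L^{1+\beta}$ and $\tilde M:=M^{1+\beta}$, the recursion becomes
$$u_{k+1}\le \tilde L^{k}\,\tilde M\, u_k^{\,1+\beta}.$$
Since $1+\beta>1$, this is a genuinely super-linear recursion, and such recursions decay to zero geometrically provided the starting value $u_0$ is small enough to ``win'' against the growing factor $\tilde L^{k}$.

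Next I would prove by induction on $k$ the geometric decay ansatz $u_k\le R^{-k}u_0$ with the choice $R:=\tilde L^{1/\beta}>1$. The base case $k=0$ is trivial. For the inductive step, substituting the ansatz into the recursion and using $R^{\beta}=\tilde L$ yields
$$u_{k+1}\le \tilde L^{k}\,\tilde M\, R^{-k(1+\beta)}\,u_0^{\,1+\beta}=\tilde M\, u_0^{\,\beta}\,R^{-k}\,u_0.$$
Thus the desired inequality $u_{k+1}\le R^{-(k+1)}u_0$ reduces to the single, $k$-independent smallness condition $\tilde M\,u_0^{\,\beta}\le R^{-1}=\tilde L^{-1/\beta}$, i.e.\ $u_0\le \tilde M^{-1/\beta}\tilde L^{-1/\beta^{2}}$.

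The last step is pure bookkeeping of exponents: using $1/\beta=(1-\alpha)/\alpha$ and $(1+\beta)/\beta=1/\alpha$, one checks that $\tilde M^{-1/\beta}=M^{-(1+\beta)/\beta}=M^{-1/\alpha}$ and $\tilde L^{-1/\beta^{2}}=L^{-(1+\beta)/\beta^{2}}=L^{-(1-\alpha)/\alpha^{2}}$. Hence the condition $\tilde M\,u_0^{\,\beta}\le \tilde L^{-1/\beta}$ coincides exactly with the hypothesis $u_0\le M^{-1/\alpha}L^{-(1-\alpha)/\alpha^{2}}$ of the lemma, the induction closes, and $u_k\le R^{-k}u_0\to 0$ since $R>1$.

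There is no genuine obstacle here: everything is forced by the shape of the recursion, and the ansatz $R=\tilde L^{1/\beta}$ is pinned down uniquely by the requirement that the prefactor in the inductive step collapse into a clean $k$-independent smallness condition on $u_0$. I note in passing that the monotonicity of $\{u_k\}$ stated in the hypothesis is never used in this direct inductive argument; it is only there to ensure that the iteration is well-posed and that one can iterate the defining inequality starting from $u_0$.
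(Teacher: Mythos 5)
Your proof is correct, and it follows precisely the route the paper indicates (the paper states only that the lemma ``can be easily obtained by induction'' without supplying details): you rewrite the recursion in the super‑linear form $u_{k+1}\le \tilde L^k\tilde M u_k^{1+\beta}$ with $\beta=\alpha/(1-\alpha)$ and close a geometric‑decay induction $u_k\le R^{-k}u_0$, and the exponent bookkeeping verifying that the resulting smallness condition matches the stated hypothesis is exact. Your side remark that the stated monotonicity of $(u_k)$ is not actually used is also accurate.
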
 

We now extend to our setting an important result which is known for locally finite partitions 
(we refer to \cite[Theorem 2.4]{ct2} and \cite[Lemma 30.2]{maggi}  for the proof in the local case, and to \cite[Lemma 3.4] {cm} for the fractional case). 

 \begin{lemma}[Elimination Lemma]\label{infiltration}
  Assume that $\Per$ is either the local anisotropic perimeter or the fractional perimeter.
  Let $(E_i)_{i\in \N}$ be a $(\Lambda,r)$-minimal partition for some $r<\frac12$.
Then for every $N>0$ there exist positive constants $\sigma_0>0$ depending on $N, n, s$, and  $r_0<\frac{1}{2}$ depending on $n$ in the local case and on $n,s, \Lambda, N$ in the fractional case such that 
\[\text{ if for $ r<r_0$ there holds  }|\cup_{i>N}E_i\cap B_r(x)| \leq \sigma_0 r^n, \quad \text{then }|\cup_{i>N}E_i\cap B_{\frac{r}{2}}(x)|=0.\] \end{lemma}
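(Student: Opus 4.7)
The plan is to run a De Giorgi iteration on the decreasing sequence
\[
u_j:=\bigl|F\cap B_{\rho_j}(x)\bigr|,\quad \text{where } F:=\bigcup_{i>N}E_i,\ \ \rho_j:=\tfrac{r}{2}+\tfrac{r}{2^{j+1}},
\]
so that $u_0=|F\cap B_r(x)|$ and $\rho_j\downarrow r/2$. Setting $\gamma:=n-1$ in the local case and $\gamma:=n-s$ in the fractional case, the goal is a recursion of the form
\[
u_{j+1}^{\gamma/n}\le L^j M\,u_j
\]
with $L>1$ and $M=M(n,s,\Lambda,N)$; once this is established, Lemma \ref{lemmadg} applied with $\alpha:=1-\gamma/n\in(0,1)$ forces $u_j\to 0$ provided $u_0\le\sigma_0 r^n$ for some $\sigma_0=\sigma_0(N,n,s)$, giving $|F\cap B_{r/2}(x)|=0$.

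For each $\rho\in[r/2,r]$ I would test the $(\Lambda,r)$-minimality of $\{E_i\}$ against the competitor that absorbs the tail inside $B_\rho(x)$ into the first phase: $\widetilde E_1:=E_1\cup(F\cap B_\rho(x))$, $\widetilde E_i:=E_i$ for $1<i\le N$, and $\widetilde E_i:=E_i\setminus B_\rho(x)$ for $i>N$. Since $\widetilde E_i\Delta E_i\Subset B_\rho(x)\subset B_r(x)$ for every $i$, minimality applies, and repeating the submodularity manipulations carried out in the proof of Lemma \ref{estimate} yields an estimate of the form
\[
\Per(F\cap B_\rho(x))\le C\cdot\mathcal S(F,\rho)+C\Lambda\,|F\cap B_\rho(x)|,
\]
where the slice-type quantity $\mathcal S(F,\rho)$ equals $\mathcal H^{n-1}(F\cap\partial B_\rho(x))$ in the local case, and in the fractional case it has the form $J(F\cap B_\rho(x),\R^n\setminus B_\rho(x))$ with $J(A,B):=\int_A\int_B K_s(x-y)\,dx\,dy$, up to a harmless additive $\Per(B_\rho(x))$-contribution controlled by $r^{n-s}$.

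Combining this with the classical isoperimetric inequality
\[
u_{j+1}^{\gamma/n}\le\bigl|F\cap B_\rho(x)\bigr|^{\gamma/n}\le C\,\Per(F\cap B_\rho(x))
\]
valid for every $\rho\in(\rho_{j+1},\rho_j)$, and picking via Fubini on the annulus $B_{\rho_j}(x)\setminus B_{\rho_{j+1}}(x)$, which has width $r/2^{j+2}$, a good radius $\rho$ for which $\mathcal S(F,\rho)$ is dominated by its annulus average, one arrives at
\[
u_{j+1}^{\gamma/n}\le \frac{C\,2^{j+2}}{r}(u_j-u_{j+1})+C\Lambda\,u_j.
\]
For $r<r_0$ small enough, the $\Lambda$-term on the right is dominated by the first, producing the required iteration inequality, and Lemma \ref{lemmadg} closes the argument.

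The hard part will be the Fubini step in the fractional case. In the local case the slice $\mathcal H^{n-1}(F\cap\partial B_\rho(x))$ integrates over $\rho$ to $u_j-u_{j+1}$ by the coarea formula, so a good radius is produced immediately and $r_0$ depends only on $n$. In the fractional case the nonlocal contribution $J(F\cap B_\rho(x),\R^n\setminus B_\rho(x))$ is a double integral rather than an $(n-1)$-dimensional measure, and extracting the correct $(u_j-u_{j+1})/(\rho_j-\rho_{j+1})$ bound requires a layered splitting of the kernel $K_s$ into short- and long-range parts, combined with the uniform a priori perimeter bound from Lemma \ref{estimate}. This is precisely the step that forces $r_0$ to depend on $\Lambda$, $N$, $n$, and $s$ in the fractional setting, matching the dependencies recorded in the statement.
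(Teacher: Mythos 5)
Your overall skeleton matches the paper's: set $V=\bigcup_{i>N}E_i$, let $u_j:=|V\cap B_{\rho_j}(x)|$ on a dyadic sequence $\rho_j\downarrow r/2$, derive an iteration inequality $u_{j+1}^{1-\alpha}\le L^jMu_j$ with $\alpha=s/n$ (resp.\ $\alpha=1/n$), and close with Lemma~\ref{lemmadg}. The Fubini-on-annuli step you use to produce a good radius is a legitimate elementary substitute for the paper's integrated differential inequality $\int_0^l u^{(n-s)/n}\,dr\lesssim l^{1-s}u(l)$, though in the fractional case you correctly flag that extracting the $(u_j-u_{j+1})$-type slice bound from $J(V\cap B_\rho,\R^n\setminus B_\rho)$ requires a short/long range kernel split and is not yet carried out.

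The genuine gap is in the step where you absorb the tail only into $E_1$. Testing minimality against the single competitor $\widetilde E_1=E_1\cup(V\cap B_\rho)$, $\widetilde E_i=E_i$ for $1<i\le N$, $\widetilde E_i=E_i\setminus B_\rho$ for $i>N$, does \emph{not} yield
\[
\Per(V\cap B_\rho)\le C\,\mathcal S(V,\rho)+C\Lambda\,u(\rho).
\]
To see why, note that in the local case the decrease in partition perimeter produced by this competitor (inside $B_\rho$, before boundary corrections) is
\[
2\,\mathcal H^{n-1}\bigl(\partial^*E_1\cap\partial^*V\cap B_\rho\bigr)
+2\sum_{i<i',\ i,i'>N}\mathcal H^{n-1}\bigl(\partial^*E_i\cap\partial^*E_{i'}\cap B_\rho\bigr),
\]
whereas $\Per(V\cap B_\rho)$ also counts the interfaces of $V$ with $E_2,\dots,E_N$, which are untouched by merging $V$ into $E_1$. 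If $V$ is a small island squeezed between $E_2$ and $E_3$ with no contact with $E_1$, the minimality inequality becomes vacuous and gives no control on $\Per(V\cap B_\rho)$, so the De Giorgi recursion cannot start. The paper repairs exactly this by considering \emph{all} the competitors $\{F^j_i\}$ for $j=1,\dots,N$ and averaging over $j$; equivalently one can invoke a pigeonhole to find $j_0\le N$ with $\mathcal H^{n-1}(\partial^*E_{j_0}\cap\partial^*V\cap B_\rho)\ge\frac1N\mathcal H^{n-1}(\partial^*V\cap B_\rho)$ and absorb into that $E_{j_0}$ — this is where the factor $(1+N)$ in the paper's differential inequality and the $N$-dependence of $\sigma_0$ come from. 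You should also be careful that the ``submodularity manipulations carried out in the proof of Lemma~\ref{estimate}'' are not directly applicable: there the competitor fills the entire ball $B_r$, so the comparison set is simply $B_r$; here the competitor only fills $V\cap B_\rho$, which produces the interface bookkeeping above and is what makes the averaging necessary.
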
 

\begin{proof}
We show the result in the case of the fractional perimeter, being the local case a straightforward adaptation of the proof. 

We fix $x\in\R^n$ and   $N\in \mathbb N$, we let $V:=\cup_{i>N}E_i$ and 
$u(r):= |V\cap B_r(x)|$ for $0<r<1/2$. We have to show that if $u(r)\leq \sigma r^n$ then $u(r/2)=0$.

For $j\in\{1,\ldots,N\}$ we also let $\{F^{j}_i\}_{i\in \mathbb N}$ be the partition defined as 
\[
F^{j}_i:=\left\{
\begin{array}{ll}
E_i & \text{if $i\le N$ and $i\ne j$,}
\\
E_j \cup (V\cap B_r(x)) & \text{if $i= j$,}
\\
E_i\setminus B_r(x) & \text{if $i>N$,}
\end{array}
\right.\]
so that $E_i\Delta F^{j}_i\subset B_r(x)$ for all $i$. Then,
by Proposition \ref{partmin} we get that
\begin{eqnarray*}
 \Lambda u(r) &\ge& \sum_i \Per(E_i,B_r(x)) - \Per(F^{j}_i,B_r(x)) 
\\
&=& 
\Per(E_j,B_r(x)) - \Per(F^{j}_j,B_r(x)) + \sum_{i>N}  \Per(E_i,B_r(x)) - \Per(E_i\setminus B_r(x) ,B_r(x)) 
\\
&\ge&
\Per(E_j,B_r(x)) - \Per(E_j\cup (V\cap B_r(x)),B_r(x)) + \Per(V,B_r(x)) - \Per(V\setminus B_r(x) ,B_r(x)),
\end{eqnarray*}
where we used the submodularity  of the perimeter and the fact that 
\[\Per(V\setminus B_r(x) ,B_r(x)) = \int_{V\setminus B_r(x)}\int_{B_r(x)}\frac{1}{|x-y|^{n+s}}dxdy=\sum_{i>N}\ \Per(E_i\setminus B_r(x) ,B_r(x)) .\]
Averaging over $j\in\{1,\ldots,N\}$ and arguing exactly as in the proof of \cite[Lemma 3.4]{cm} we then obtain that
\begin{equation}\label{eqode}
C_1 u(r)^\frac{n-s}{n} \le C_2(1+N)\int_0^r\frac{u'(t)}{(r-t)^s}\,dt + \Lambda N u(r),
\end{equation}
where $C_1,\,C_2$ are positive constants depending only on $s$ and $n$. Now we choose $r_0>0$ such that  
\[ r_0^s<\min \left(\frac{1}{2^s},  \frac{C_1}{2\Lambda N \omega_n^{\frac{s}{n}}}\right) \]
and we get  for all $r\leq r_0$, 
\[\Lambda N u(r)\leq \Lambda N  u(r)^\frac{n-s}{n} \left( \omega_n r^n\right)^{\frac{s}{n}}\leq \frac{C_1}{2} u(r)^\frac{n-s}{n}. \]
We substitute this inequality in  \eqref{eqode}  and then integrate  \eqref{eqode}  between $0$ and $l<r$, so that we get 
\begin{equation}\label{eqode2}
 \int_0^l u(r)^\frac{n-s}{n}dr  \le \frac{2C_2(1+N)}{(1-s)C_1} l^{1-s} u(l) .
\end{equation}
Let \[\sigma_0:=\left(\frac{(1-s)C_1}{8C_2(1+N)}\right)^{\frac{n}{s}}2^{-\frac{n(n-s)}{s^2}}, \]
and assume that there exists $\bar r<r_0$ such that $u(\bar r)\leq \sigma_0\bar r^n$.  Define the  sequence $r_k:=\frac{\bar r}{2}+\frac{\bar r}{2^{k+1}}$, and let $u_k:= u(r_k)$. Then, by definition, $u_0=u(\bar r)$ 
and $\lim_k u_k=u\left(\frac{\bar r}{2}\right)$. 

Now we let $l=r_k$ in \eqref{eqode2}, so that
\[ u_{k+1}^{\frac{n-s}{n}}\frac{\bar r}{2^{k+2}}\leq \int_{r_{k+1}}^{r_k} u(r)^\frac{n-s}{n}dr \leq \int_{0}^{r_k} u(r)^\frac{n-s}{n}dr \leq  \frac{2C_2(1+N)}{(1-s)C_1} r_k^{1-s} u_k\leq   \frac{2C_2(1+N)}{(1-s)C_1} \bar r^{1-s} u_k,  \]
which implies that
\[u_{k+1}^{1-\frac{s}{n}} \leq 2^{k+2} \frac{2C_2(1+N)}{(1-s)C_1} \frac{1}{\bar r^s} u_k. 
\]
We now apply Lemma \ref{lemmadg} to the sequence $u_k$, with  $\alpha=\frac{n}{s}$, $L=2$ and  $M=\frac{8C_2(1+N)}{(1-s)C_1\bar r^s}$, 
and we deduce that $u_k\to 0$ as $k\to +\infty$, which gives the thesis.
\end{proof} 

\begin{proposition}\label{conicalpart}  Assume that $\Per$ is either the local anisotropic perimeter or  the fractional perimeter. Let $\{E_i\}_{i\in\mathbb I}$ conical partition, which is $(0, \infty)$-minimal. Then $\mathbb I$ is finite. 
\end{proposition}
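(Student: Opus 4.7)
The plan is to argue by induction on the dimension $n$, using the Elimination Lemma \ref{infiltration} at small scales after a dimensional reduction at points of the unit sphere.

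For the base case $n=1$, the only conical partition of $\R$ with vertex $0$ consists (up to zero-measure sets) of $(-\infty,0)$ and $(0,+\infty)$, so $|\mathbb I|\le 2$.

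For the inductive step, assume the claim in dimension $n-1$, fix $p\in S^{n-1}$ and examine the blow-up $\{E_i^{p,\eps}\}$ as $\eps\to 0$. The uniform perimeter bound of Lemma \ref{estimate} together with the compactness of $\Per$ extracts (up to a subsequence) an $L^1_{\mathrm{loc}}$ limit $\{\tilde E_i\}$. A direct change of variables using conicality (namely $E_i-\lambda p=\lambda(E_i-p)$ for all $\lambda>0$) shows that the limit is simultaneously invariant under translations in the direction $\hat p=p/|p|$ and scale-invariant, and hence $\tilde E_i=F_i\times \R\hat p$ for a conical partition $\{F_i\}$ of $\R^{n-1}$ with vertex $0$. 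One verifies---by the usual slicing in the local case, and by passing to a localized $s$-perimeter and using the $s$-homogeneity of $K_s$ in the fractional case---that $\{F_i\}$ inherits $(0,\infty)$-minimality, so by the inductive hypothesis it has only finitely many, say $K_p<\infty$, nontrivial phases.

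Relabel the $E_i$ locally at $p$ so that the $K_p$ phases surviving in the blow-up are listed first. The $L^1_{\mathrm{loc}}$ convergence and the identity $\sum_{i\le K_p}|\tilde E_i\cap B_1|=|B_1|$ give
\[
\sum_{i>K_p}|E_i\cap B_\eps(p)|=\eps^n\sum_{i>K_p}|E_i^{p,\eps}\cap B_1|=o(\eps^n)\qquad\text{as }\eps\to 0^+.
\]
For $\eps$ sufficiently small (depending on $K_p$) this is smaller than $\sigma_0(K_p)\eps^n$, while $\eps<r_0(K_p)$. Applying the Elimination Lemma at $p$ with $N=K_p$ then yields $\bigl|\bigcup_{i>K_p}E_i\cap B_{\eps/2}(p)\bigr|=0$, so only finitely many phases of $\{E_i\}$ meet $B_{\eps/2}(p)$. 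Covering $S^{n-1}$ by finitely many such balls and noting that every phase of positive measure has positive $(n-1)$-dimensional trace on $S^{n-1}$ by conicality, $\mathbb I$ is finite.

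The main obstacle is the reduced-minimality step: for the local anisotropic perimeter this is classical slicing, but for the fractional perimeter a cylindrical set $F\times\R$ has infinite global $s$-perimeter, so one must work with a localized formulation and use that blow-up preserves the class of fractional perimeters (thanks to the $s$-homogeneity of $K_s$).
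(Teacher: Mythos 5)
Your proof is correct and uses the same core machinery as the paper's: blow-up at a boundary point, reduction of the blow-up to a cylinder over a conical partition of $\R^{n-1}$ (thanks to conicality), lower semicontinuity to pass $(0,\infty)$-minimality to the limit, and the Elimination Lemma \ref{infiltration} to eliminate small phases. The logical organization differs: the paper argues by contradiction, fixing a cluster point $x_0\in S^{n-1}$ of boundary points of infinitely many phases, then showing (via the Elimination Lemma) that the blow-up at $x_0$ remains a genuinely infinite conical partition, and iterating the dimensional reduction down to $n=1$; you instead run a direct induction on $n$, applying the inductive hypothesis at \emph{every} $p\in S^{n-1}$ to conclude that the blow-up is finite there, then using the Elimination Lemma to get finiteness in a neighborhood of $p$, and finally covering the compact sphere by finitely many such neighborhoods. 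Your extra step --- that a cone with positive measure must have positive trace on an annulus around $S^{n-1}$, hence must meet the finite cover --- is correct and closes the argument without appealing to a contradiction. The trade-off is minor: the paper avoids the covering argument at the price of a careful choice of blow-up point, while you avoid the careful choice at the price of the covering argument. Both handle the delicate points (transfer of minimality to the cylindrical limit; the $o(\eps^n)$ estimate from $L^1_{\rm loc}$ convergence and the fact that the finite partial sum $\sum_{i\le K_p}|E_i^{p,\eps}\cap B_1|$ converges to $|B_1|$) at essentially the same level of detail as the paper.
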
 
\begin{proof} This result is proved for the classical perimeter in \cite[Theorem 8]{mt}. 

Observe that if $n=1$, the only conical partition is given by $(-\infty, 0), (0, +\infty)$, so it is finite. Assume that $n\geq 2$, and fix $B_1(0)$. Assume that the partition $\{E_i\}$ is countable (not finite). Eventually passing to a subsequence, we fix a sequence  $x_i\in \partial B_r\cap \partial E_i$ with $x_i\to x_0\in \partial B_1$. We now consider the blow-up  at $x_0$ at scale $r>0$ of the partition $\{E_i\}$, see Definition \ref{blowup}.  By the estimate in Lemma \ref{estimate}, and by the rescaling properties of the fractional perimeter and of the local perimeter, we get that $\Per(E_i^{x_0, \eps}, B_r)\leq C_0$ for every $r>0$. So, by the compactness property of $\Per$, we get that, up to passing to a subsequence, $E_i^{x_0, \eps}\to Q_i$ as $\eps\to 0$, locally in $L^1(\R^n)$ and in particular $E_i^{x_0, \eps}\to Q_i$ as $\eps\to 0$,  in $L^1(B_{1}(x_0)$. 

By  semicontinuity properties, see Lemma \ref{lsctheorem}, also $\{Q_i\}$ is $(0, \infty)$-minimal. 
We observe now that if the partition $\{E_i\}$ were not finite, then necessarily, also the partition $\{Q_i\}$ is not finite. Indeed if it were not the case, then by the $L^1$ convergence we could find $N>0$ elements of the partition  $E_i$, for $i=1, \dots, N$, such that for $r>0$ sufficiently small, $\cup_{i>N}E_i\cap B_r(x_0)| \leq \sigma_0 r^n$  where $\sigma_0$ is as in Elimination Lemma \ref{infiltration}. Therefore, by the Elimination Lemma \ref{infiltration}, we conclude that $Q_i=\emptyset$ for $i>N$, and so also $|E_i\cap B(x_0, r)|=0$ for $i>N$, in contradiction with the fact that the conical partition  $\{E_i\}$ is not finite. 

Since the partition $\{E_i\}$ is conical, then the partition $\{Q_i\}$ is given by cylinders with a common direction, so, up to a rotation of coordinates, we may write for all $i\in \mathbb{I}$,  $Q_i=C_i\times \R$, for some $C_i$ cone of vertex $x_0$. It is easy to check that $\{C_i-x_0\}$ is a $(0, \infty)$-minimal conical partition in $\R^{n-1}$ (for the fractional setting see \cite[Proposition 3.11]{cm} and \cite[Theorem 1.10]{crs}).  So, we get a countable (not finite) $(0, \infty)$-minimal conical partition in $\R^{n-1}$. By repeating this argument, we eventually end up at $n=1$, getting a contradiction with the straightforward fact that in $\R$ conical partitions are finite. 
\end{proof} 

\begin{remark}\upshape 
It is an open question which is  the maximal number of chambers of a $(0, \infty)$-minimal conical partition in $\R^n$ for $n>1$. In the case of the local isotropic perimeter, it is known that in $\R^2$, this number is $3$ and in $\R^3$ this number is $4$, as proved by J. Taylor (see \cite{taylor, maggi}). 
\end{remark} 

\subsection{Regularity in the case of  the local perimeter}\ \ \\
In this section we assume that  $\Per$ is   the local anisotropic perimeter, induced by a $G$-periodic norm  $\nu\mapsto \phi(x, \nu)$  on $\R^n$, with such that $\phi^2$ is uniformly convex and $\phi^2\in C^2(\R^n\times \R^n)$. We review well known results about regularity of locally minimal partitions.

\begin{theorem}\label{structuretheorem} 
Let $\{E_k\}_{k\in\mathbb I}$ be a partition of $\R^n$ with finite perimeter.  Then 
\[\mathcal{H}^{n-1} (\partial \{E_k\}_{k\in\mathbb I}\setminus\partial^*\{E_k\}_{k\in\mathbb I})=0. \]
\end{theorem}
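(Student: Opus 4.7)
The plan is to prove two inclusions modulo $\mathcal{H}^{n-1}$-negligible sets, namely
\[
\partial\{E_k\}_{k\in\mathbb I} \subset \bigcup_k \partial^* E_k \cup N_1
\quad \text{and} \quad
\bigcup_k \partial^* E_k \subset \partial^*\{E_k\}_{k\in\mathbb I} \cup N_2,
\]
with $\mathcal{H}^{n-1}(N_1) = \mathcal{H}^{n-1}(N_2) = 0$. Together with the trivial containment $\partial^*\{E_k\} \subset \partial\{E_k\}$ and a diagonal passage to the finite-perimeter sums, this would yield the statement.

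For the first inclusion I would invoke Federer's structure theorem on each $E_k$ separately. Since $\sum_k \Per(E_k) < +\infty$ and the anisotropy $\phi$ is equivalent to the Euclidean norm, each $E_k$ has finite Euclidean perimeter, so $\partial^e E_k$ coincides with $\partial^* E_k$ up to $\mathcal{H}^{n-1}$-null sets; taking a countable union, for $\mathcal{H}^{n-1}$-a.e.\ $x \in \R^n$ and every $k$ the Lebesgue density $\theta_k(x)$ exists and lies in $\{0,\frac12,1\}$, with value $\frac12$ precisely on $\partial^* E_k$. From the partition identity $\sum_k |E_k \cap B_\rho(x)| = |B_\rho(x)|$ and Fatou's lemma for series, one obtains $\sum_k \theta_k(x) \le 1$, with equality off a further $\mathcal{H}^{n-1}$-null set. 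If such an $x \in \partial\{E_k\}$ does not belong to any $\partial^* E_k$, then $\theta_k(x) \in \{0,1\}$ for every $k$, so exactly one $k_0$ satisfies $\theta_{k_0}(x)=1$ and all others vanish; a standard density argument then shows $x$ is a Lebesgue interior point of $E_{k_0}$, contradicting $x \in \partial\{E_k\}$ outside an additional null set (the exceptional set is covered by dyadic balls using $\sum_k \mathcal{H}^{n-1}(\partial^* E_k) < +\infty$).

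For the second inclusion I would apply the structure theorem for Caccioppoli partitions (cf.\ \cite[Theorem 29.1]{maggi}): at $\mathcal{H}^{n-1}$-a.e.\ $x \in \partial^* E_k$, the blow-up $(E_k-x)/r \to H$ in $L^1_{\mathrm{loc}}$ as $r\to 0^+$ for a suitable half-space $H$, so the complement $\bigcup_{j\ne k} E_j$ blows up to $\R^n \setminus H$. Because $\sum_k \mathcal{H}^{n-1}(\partial^* E_k) < +\infty$, the rescaled perimeters of $\{(E_j-x)/r\}_{j\ne k}$ are uniformly bounded on compact sets, and an $L^1_{\mathrm{loc}}$-compactness and diagonal extraction produces an index $j_0 \ne k$ with $(E_{j_0}-x)/r \to \R^n \setminus H$. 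This forces $\theta_{j_0}(x)=\frac12$ and hence $x \in \partial^* E_{j_0} \cap \partial^* E_k \subset \partial^*\{E_k\}$.

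The more delicate step is the second inclusion, where one must rule out that in the blow-up the ``mass'' of $\R^n\setminus H$ disperses across infinitely many limits (or escapes to infinity). This is handled precisely by the finiteness of $\sum_k \mathcal{H}^{n-1}(\partial^* E_k)$, which provides the uniform perimeter bound needed for the diagonal compactness argument and guarantees that a single partner half-space is selected at $\mathcal{H}^{n-1}$-a.e.\ point of $\partial^* E_k$.
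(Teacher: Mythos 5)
The paper dispatches this statement with a one-line citation to \cite[Lemma 1.4]{ct1} and \cite[Proposition 2.1]{npst2}, whereas you attempt a self-contained argument; the second inclusion is essentially the structure theorem for Caccioppoli partitions and is fine, but the first inclusion contains a genuine gap. After Federer and Fatou you reduce to the situation where $\theta_k(x)\in\{0,1\}$ for all $k$ with exactly one $\theta_{k_0}(x)=1$, and then assert that ``a standard density argument shows $x$ is a Lebesgue interior point of $E_{k_0}$,'' contradicting $x\in\partial\{E_k\}$ up to a further $\mathcal{H}^{n-1}$-null set. This is where the proof breaks: having Lebesgue density $1$ with respect to $E_{k_0}$ is strictly weaker than having some ball $B_\rho(x)$ essentially contained in $E_{k_0}$, and the discrepancy can occupy a set of positive $\mathcal{H}^n$-measure, hence of \emph{infinite} $\mathcal{H}^{n-1}$-measure. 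Concretely, take $E_1=\bigcup_j B_{r_j}(x_j)$ with $\{x_j\}$ dense in $(1/4,3/4)^n$, $\sum_j r_j^{n-1}<\infty$ and $|E_1|<1/2$, and set $E_2=\R^n\setminus E_1$. Both sets have finite perimeter, yet almost every $x\in(1/4,3/4)^n\setminus E_1$ is a density-$1$ point of $E_2$ and nonetheless satisfies $0<|E_1\cap B_\rho(x)|<|B_\rho(x)|$ for every $\rho>0$ (because $E_1$ is open and dense while $E_2$ has full density at $x$), so $x\in\partial\{E_1,E_2\}$; this gives $\mathcal{H}^{n-1}\bigl(\partial\{E_1,E_2\}\setminus\partial^*\{E_1,E_2\}\bigr)=+\infty$. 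Your proposed repair via a dyadic covering using $\sum_k\mathcal{H}^{n-1}(\partial^*E_k)<\infty$ cannot work either, since at density-$1$ points of $E_{k_0}$ one has $\Per(E_{k_0},B_\rho(x))=o(\rho^{n-1})$, so there is no lower perimeter-density bound with which to run a Vitali-type argument on the bad set.

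The upshot is that the inclusion $\partial\{E_k\}\subset\bigcup_k\partial^*E_k\cup N_1$ with $\mathcal{H}^{n-1}(N_1)=0$ is not a consequence of finite perimeter alone; what makes the statement true in the intended applications is additional structure on the partition, such as the lower volume-density bounds valid for $(\Lambda,r)$-minimal partitions, which guarantee that $\partial\{E_k\}$ coincides (up to a null set) with the support of the total perimeter measure. The cited references are supplying exactly this extra input, which your blind argument silently assumes. Your second inclusion---that $\mathcal{H}^{n-1}$-a.e.\ point of $\partial^*E_k$ lies in some $\partial^*E_i\cap\partial^*E_j$---is correct, but it is reached more directly from $\sum_k\theta_k(x)\le 1$ plus Federer than by the blow-up-plus-diagonal-extraction route you sketch, which as you note requires a separate argument to rule out dispersion of the complementary half-space among infinitely many phases.
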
 
 \begin{proof} The result is a consequence of the structure of Caccioppoli sets (see \cite[Lemma 1.4]{ct1} and \cite[Proposition 2.1]{npst2}). 
   \end{proof} 

\begin{theorem}[$C^{1, \alpha}$ regularity]\label{regthm} 
Let $D\subset\R^n$ be an isoperimetric fundamental domain. 
Then $\partial D$ is a $C^{1,\alpha}$ hypersurface, for some $\alpha\in (0,1)$, up to a closed singular set $\Sigma\neq \emptyset$ 
with $\mathcal{H}^{n-1}(\Sigma)=0$. 
\end{theorem}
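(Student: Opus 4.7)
The plan is to combine the $(\Lambda, r)$-minimality of the $G$-periodic partition generated by $D$ with the Elimination Lemma and classical elliptic regularity for two-phase anisotropic minimizers, bypassing the monotonicity formula (which is unavailable in the anisotropic setting) by applying the Elimination Lemma directly at reduced boundary points.

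First, by Proposition \ref{partmin}, since $\Lambda = 0$ in the local case, the $G$-periodic partition $\{gD\}_{g \in G}$ is $(0, r)$-minimal for every $r < \tfrac{1}{2}$ in the sense of Definition \ref{defminimi}. Theorem \ref{structuretheorem} applied to this partition yields $\mathcal{H}^{n-1}(\partial \{gD\}_g \setminus \partial^* \{gD\}_g) = 0$. Defining the singular set $\Sigma$ as the complement in $\partial D$ of the set of points at which $\partial D$ is locally a $C^{1,\alpha}$ hypersurface, it suffices to prove $C^{1,\alpha}$ regularity at every reduced boundary point $x_0 \in \partial D \cap \partial^* \{gD\}_g$ and to show $\Sigma \neq \emptyset$.

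For regularity at such $x_0$: by definition of reduced boundary there exists $g_1 \in G \setminus \{\mathrm{id}\}$ such that the blow-ups $E_{\mathrm{id}}^{x_0, \eps}$ and $E_{g_1}^{x_0, \eps}$ converge in $L^1_{\mathrm{loc}}$ to complementary half-spaces, so the density at $x_0$ of $V := \bigcup_{g \neq \mathrm{id}, g_1} gD$ vanishes. Reindexing so that $E_1 = D$, $E_2 = g_1 D$ and $V = \bigcup_{i > 2} E_i$, and choosing $r < r_0$ small enough that $|V \cap B_r(x_0)| \le \sigma_0 r^n$ (with $\sigma_0, r_0$ from Lemma \ref{infiltration} for $N = 2$), the Elimination Lemma yields $|V \cap B_{r/2}(x_0)| = 0$. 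Hence in $B_{r/2}(x_0)$ the partition reduces to a two-phase anisotropic minimizer between $D$ and $g_1 D$, and classical elliptic regularity for uniformly convex $C^2$ anisotropies gives $C^{1, \alpha}$ regularity of $\partial D \cap B_{r/2}(x_0)$.

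For non-emptiness of $\Sigma$: if every point of $\partial D$ were regular, then locally $\partial D$ would separate $D$ from a single translate $g(x_0) D$, and the map $x_0 \mapsto g(x_0)$ would be locally constant. Since $\Z^n$ is torsion-free, $D$ has at least two distinct neighboring translates, so any transition between the corresponding connected components of $\partial D$ is a point where at least three chambers meet, contradicting regularity. The hard part is the Elimination Lemma step: one must verify that the density of the ``other chambers'' at reduced boundary points is small enough to enter the regime of Lemma \ref{infiltration}, which however follows from the definition of reduced boundary combined with De Giorgi's differentiation theorem. Crucially, no monotonicity formula is invoked, which is essential in the anisotropic setting where such a formula is not available.
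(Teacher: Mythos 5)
Your main argument is exactly the one the paper uses: invoke Proposition \ref{partmin} to obtain $(0,r)$-minimality of the $\Z^n$-periodic partition; at an arbitrary point $x_0$ of the reduced boundary of the partition, observe that (up to relabelling) $x_0\in\partial^*E_1\cap\partial^*E_2$, so the density of $\bigcup_{i>2}E_i$ at $x_0$ vanishes; pick $r$ small enough to enter the hypothesis of the Elimination Lemma \ref{infiltration} with $N=2$; conclude that only two chambers appear in $B_{r/2}(x_0)$; and then apply classical two-phase regularity theory for elliptic (uniformly convex $C^2$) anisotropies. You also correctly note that Theorem \ref{structuretheorem} is what converts ``regular outside the complement of the reduced boundary'' into ``regular outside an $\mathcal{H}^{n-1}$-null set.'' This coincides with the paper's proof, and you are right that no monotonicity formula is needed or used here.

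Where your write-up departs from the paper is the attempt to prove $\Sigma\neq\emptyset$, which the paper's proof of Theorem \ref{regthm} in fact does not address at all (it only establishes $\mathcal{H}^{n-1}(\Sigma)=0$; compare with the proof of Corollary 4.10 where a genuine nonemptiness argument appears, but only in $n=2$). Your argument has two genuine gaps. First, the assertion ``since $\Z^n$ is torsion-free, $D$ has at least two distinct neighboring translates'' is not justified: torsion-freeness by itself does not rule out that $\partial D$ is adjacent to a single translate $g_1D$ (one would need to analyze the boundary of $D\cup g_1D$ and iterate, and this requires either boundedness of $D$ — which is not available for the anisotropic perimeter in $n\ge 3$ — or a more careful topological argument). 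Second, even granting that the locally constant map $x_0\mapsto g(x_0)$ takes at least two values on $\partial D$, it is constant on each connected component of $\partial D$, and distinct connected components are disjoint closed sets, so there is no ``transition point'' on $\partial D$ between the two values: the putative triple point would lie on the boundary of some other chamber, not on $\partial D$ a priori, and one would have to argue it is carried back to $\partial D$ by $\Z^n$-periodicity. These steps can likely be filled in, but as written they do not constitute a proof, and they implicitly rely on boundedness and connectedness properties of $D$ that are not established at this stage of the paper.
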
 

\begin{proof} Let $\{E_i\}_{i\in \mathbb N}$  be the  $G$-periodic partition of $\R^n$ generated by $D$. 
Then by Proposition \ref{partmin} this partition is  $(0,r)$-minimal for every $r<\frac12$.
Fix $x_0$ in the reduced boundary of the partition. Then, up to reordering the indexes, we have that $x_0\in \partial^* E_1\cap \partial^* E_2$. So, there exists $r>0$ sufficiently small such that $|\cup_{i>2}E_i\cap B_r(x_0)| \leq \sigma_0 r^n$ where $\sigma_0=\sigma_0(2,n)$ is as in Lemma \ref{infiltration}. Then by Lemma \ref{infiltration} we get that 
in $|\cup_{i>2}E_i\cap B_{r/2}(x_0)|=0$  and we may apply the classical  regularity theory (see \cite{bombieri}), which gives that 
$\partial D \cap B_{r/2}(x_0)$ is a smooth hypersurface with constant mean curvature, outside a closed singular set of zero $(n-1)$-dimensional Hausdorff measure.  
\end{proof}

In the case of the isotropic perimeter, we can recover the result of \cite{choe} and extend it to every dimension.
\begin{theorem} \label{teoteo}
In the case of the isotropic perimeter, the $G$- periodic partition generated by a minimal fundamental domain is locally finite.

In particular every minimal fundamental domain is bounded. 
Moreover  $\partial D$ is a $C^\infty$ hypersurface in $\R^n$  up to a closed singular set $\Sigma\neq \emptyset$ 
with $\mathcal{H}^{n-1}(\Sigma)=0$. Finally, if $n=2$, then $\Sigma$ is a discrete set. 
\end{theorem}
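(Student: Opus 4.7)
The plan is to derive local finiteness of the $G$-periodic partition $\{E_i\}_{i\in\N}$ generated by $D$ via a blow-up argument, deduce boundedness of $D$, and then bootstrap Theorem \ref{regthm} to $C^\infty$ regularity in the isotropic case.

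For local finiteness, by Proposition \ref{partmin} the partition is $(0,r)$-minimal for every $r<\tfrac12$ (since $\Lambda=0$ for the local perimeter). Fix $x_0\in\R^n$ and consider the blow-ups $\{E_i^{x_0,\eps}\}_i$ at scale $\eps\to 0$. By Lemma \ref{estimate} the sums $\sum_i\Per(E_i^{x_0,\eps},B_R)$ are uniformly bounded for each $R>0$, so the compactness axiom of $\Per$ together with a diagonal extraction yields, along a subsequence, $L^1_{\mathrm{loc}}$ limits $E_i^{x_0,\eps}\to Q_i$. The scaling invariance of the local perimeter and lower semicontinuity show that $\{Q_i\}$ is $(0,\infty)$-minimal, and the classical monotonicity formula for isotropic minimizing partitions (\cite[Lemma 5]{mt}) forces $\{Q_i\}$ to be conical. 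Proposition \ref{conicalpart} then gives that it has at most $N=N(x_0)$ nontrivial chambers.

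Relabeling so that $|Q_i|=0$ for $i>N$, the $L^1_{\mathrm{loc}}$ convergence yields, for $\eps$ small enough,
\[
\Big|\bigcup_{i>N}E_i\cap B_\eps(x_0)\Big|\le \sigma_0\,\eps^n,
\]
where $\sigma_0=\sigma_0(N,n)$ is the threshold in the Elimination Lemma \ref{infiltration}. That lemma gives $|\bigcup_{i>N}E_i\cap B_{\eps/2}(x_0)|=0$, so only finitely many $E_i$ have positive density in a neighborhood of $x_0$. A finite covering of any compact set by such neighborhoods proves local finiteness of $\{E_i\}$. Boundedness of $D$ is then immediate: if $D$ admitted representatives $x_k$ with $|x_k|\to\infty$, writing $x_k=y_k+z_k$ with $y_k\in[0,1)^n$ and $z_k\in\Z^n$ would produce infinitely many distinct translates $D-z_k$ intersecting the compact set $[0,1]^n$, contradicting local finiteness.

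For the last assertions, at every regular boundary point exactly two chambers meet and the interface is a classical local area minimizer, so the $C^{1,\alpha}$ regularity from Theorem \ref{regthm} bootstraps to $C^\infty$ by Schauder estimates for the minimal surface equation (cf.\ \cite{bombieri}). The singular set $\Sigma$ cannot be empty: otherwise $\partial D$ would be a smooth compact hypersurface bounding the bounded set $D$, and the $\Z^n$-translates of $D$ could not tile $\R^n$ without producing junction points. For $n=2$, the classification of planar $(0,\infty)$-minimal cones (a half-plane or the triple junction with $2\pi/3$ angles, cf.\ \cite{taylor}) together with local finiteness shows that the blow-up at each $x_0\in\Sigma$ is the triple junction, and such points are isolated; hence $\Sigma$ is discrete. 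The principal obstacle is the production of a conical tangent partition with finitely many chambers: this step combines the monotonicity formula (available only for the isotropic perimeter among local ones) with Proposition \ref{conicalpart}, and is precisely what cannot be carried out for general anisotropic perimeters.
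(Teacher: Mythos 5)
Your proof follows essentially the same structure as the paper's: blow-up, monotonicity formula from Massari--Tamanini forcing the tangent partition to be conical, finiteness of conical $(0,\infty)$-minimal partitions via Proposition \ref{conicalpart}, Elimination Lemma \ref{infiltration} to conclude local finiteness, and then standard regularity theory. The one place you actually improve on the paper's exposition is the boundedness step: the paper cites Proposition \ref{locallyfinite}, which is stated and proved in the \emph{opposite} direction (precompact $D$ $\Rightarrow$ locally finite partition), whereas what is needed is the converse; your direct argument (unbounded $D$ would force infinitely many translates $D-z_k$ to meet $[0,1]^n$, contradicting local finiteness) is the correct way to close that gap. Your sketch of why $\Sigma\neq\emptyset$ (translates of a set with smooth boundary cannot tile) is the same heuristic the paper makes rigorous only in the fractional Corollary; it is plausible here too but left at the sketch level by both you and the paper. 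Overall, correct and aligned with the paper's route, with a cleaner treatment of the boundedness implication.
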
  

 \begin{proof}  The local finiteness of a (locally) minimal partition has been proved in \cite[Theorem 10]{mt}. The main technical part  is to show that the blow-up $L^1$ limit  of a partition is given by a conical partition. This result is obtained as a consequence of a monotonicity formula, see \cite[Lemma 5]{mt}. Once that this result is proved, it is possible to apply Proposition \ref{conicalpart}, which given the finiteness of any $(0,\infty)$-minimal conical partition. From this, using the Elimination Lemma \ref{infiltration}, one concludes the local finiteness of the partition (with the same argument used in the proof of Proposition \ref{conicalpart}).
  
This  implies the boundedness of the isoperimetric fundamental domain, see Proposition \ref{locallyfinite}. Once that the partition is locally finite, we get upper and lower density bounds on the elements of the partitions and so the standard regularity theory applies (see \cite[Theorem IV.2.1, Theorem IB.2.7]{maggi}).
 \end{proof} 
 
\subsection{Regularity in the case of the fractional perimeter}\ \ \\
\\ 
Let now $\Per$ be the fractional perimeter induced by the kernel 
\[
  K_s(h) = \frac{1}{|h|^{n+s}} \quad s\in (0,1).
\]
We shall prove the following analog of Theorem \ref{teoteo}.

 \begin{theorem}\label{regfrac}   
 The periodic partition generated by a minimal fundamental domain is locally finite.
 
In particular every minimal fundamental domain is bounded. 
Moreover  $\partial D$ is a $C^\infty$  hypersurface in $\R^n$ up to a closed singular set $\Sigma\neq \emptyset$ 
with $\mathcal{H}^{n-1}(\Sigma)=0$. Finally, if $n=2$, then $\Sigma$ is a discrete set. 
\end{theorem}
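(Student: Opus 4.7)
The strategy mirrors Theorem \ref{teoteo}, replacing the classical monotonicity formula by its fractional analogue. By Proposition \ref{partmin}, the $\Z^n$-periodic partition $\{E_i\}_{i\in\N}$ induced by a minimizer $D$ is $(\Lambda,r)$-minimal for every $r<\tfrac12$, with $\Lambda=\int_{\R^n\setminus B_{1-2r}(0)}K_s(h)\,dh<+\infty$.

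The first step is to adapt to countable partitions the monotonicity formula of \cite[Theorem 3.10]{cm}, originally obtained in the finite case via the Caffarelli--Silvestre extension. Concretely, one extends each $\chi_{E_i}$ to an $s$-harmonic function $u_i$ on $\R^{n+1}_+$ with weight $y^{1-2s}$; the uniform perimeter bound of Lemma \ref{estimate} guarantees summability of $(u_i)$ and allows to show that the rescaled weighted Dirichlet energy $r\mapsto r^{s-n}\sum_i\int_{B_r^+(x_0)}|\nabla u_i|^2\,y^{1-2s}\,dxdy$, corrected by a $\Lambda$-term of order $r$, is nondecreasing in $r$.

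With the monotonicity formula in hand, I perform a blow-up at an arbitrary $x_0\in\R^n$: Lemma \ref{estimate} together with the scaling of the fractional perimeter give, via the compactness axiom, an $L^1_{\text{loc}}$ limit partition $\{Q_i\}$; Lemma \ref{lsctheorem} and the vanishing of the $\Lambda$-correction under rescaling yield that $\{Q_i\}$ is $(0,\infty)$-minimal, and constancy of the limiting monotonicity quantity forces it to be conical. Proposition \ref{conicalpart} then gives that only finitely many $Q_i$, say those with $i\le N$, are nontrivial. Consequently, for $\varepsilon>0$ small enough the $L^1_{\text{loc}}$ convergence provides $|\bigcup_{i>N}E_i\cap B_\varepsilon(x_0)|\le\sigma_0\varepsilon^n$, with $\sigma_0=\sigma_0(N,n,s)$ the constant of Lemma \ref{infiltration}; the Elimination Lemma then yields $|\bigcup_{i>N}E_i\cap B_{\varepsilon/2}(x_0)|=0$. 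A finite covering of a fundamental domain by such balls gives local finiteness of $\{E_i\}$, whence boundedness of $D$ via Proposition \ref{locallyfinite}.

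At this stage $\partial D$ is governed by a finite $(\Lambda,r)$-minimal partition, so the $C^\infty$ regularity of $\partial D$ outside a closed singular set $\Sigma$ with $\mathcal{H}^{n-1}(\Sigma)=0$ is a direct consequence of the regularity theory of \cite{cm}, and the same topological/curvature considerations as in the local case show that $\Sigma\neq\emptyset$. For $n=2$, the classification of planar $(0,\infty)$-minimal conical fractional partitions as finitely many straight sectors meeting at a vertex, combined with the local finiteness already established, prevents accumulation of singular points, so $\Sigma$ is discrete. The main technical obstacle of the plan is the extension of the monotonicity formula to countable partitions, where the infinite sums of nonlocal energies and of extension traces must be controlled uniformly; it is precisely here that Lemma \ref{estimate} plays a crucial role.
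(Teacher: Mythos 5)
Your argument follows the paper's route for local finiteness and boundedness: $(\Lambda,r)$-minimality from Proposition \ref{partmin}, a monotonicity formula on the Caffarelli--Silvestre extension adapted to countable partitions, blow-up plus compactness and lower semicontinuity to get a $(0,\infty)$-minimal cone, Proposition \ref{conicalpart} for its finiteness, the Elimination Lemma for local finiteness, and Proposition \ref{locallyfinite} for boundedness. Up to this point you and the paper agree in substance.

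The gap is in the regularity claim. You assert that ``the $C^\infty$ regularity of $\partial D$ outside a closed singular set $\Sigma$ \dots is a direct consequence of the regularity theory of \cite{cm}.'' This is not correct: the regularity theorem in \cite{cm} gives only $C^{1,\alpha}$ regularity of the reduced boundary of a finite $(\Lambda,r)$-minimal fractional partition, away from a closed singular set of zero $\mathcal{H}^{n-1}$-measure. Passing from $C^{1,\alpha}$ to $C^\infty$ is a separate step that the paper handles with a bootstrap argument. Concretely, one must write down the Euler--Lagrange condition along $\partial^* D_i\cap\partial^* D_j$; the resulting equation has the form of a fractional mean curvature equation with a nonhomogeneous right-hand side $f(\cdot,D)$ collecting the nonlocal interaction with translated copies $D_l+l-i$, $D_l+l-j$, etc. The key observation, which your proof does not contain, is that these interactions happen at distance at least $1$ from the boundary point, so $f(\cdot,D)$ is $C^\infty$ near the nonsingular part of $\partial D$; only then can one invoke the bootstrap regularity result of Barrios--Figalli--Valdinoci \cite{bfv} to upgrade $C^{1,\alpha}$ to $C^\infty$. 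Without this argument the claimed $C^\infty$ conclusion is unjustified, and your vague appeal to ``the same \dots considerations as in the local case'' does not supply it, since the local smoothness bootstrap (elliptic PDE with smooth coefficients) has no automatic counterpart in the fractional setting.
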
  

 \begin{proof}  As in the local case, the main technical part  is to prove that the  $L^1$  limit  of the blow-up of a locally minimal  partition (which exists due to   the estimate in Lemma \ref{estimate}, and by the rescaling  and compactness properties of the fractional perimeter, see the proof of Proposition \ref{conicalpart})  is given by a conical partition. The fact that 
 the $L^1$ limit of the blow-up of a locally minimal partition is conical is a consequence of a monotonicity formula first obtained for the extension problem in \cite{crs} and then generalized to finite partitions in \cite[Theorem 3.10]{cm}. The generalization of this results to the case of countable partitions is straightforward. 
 
As a consequence, it is possible to apply Proposition \ref{conicalpart}, which gives the finiteness of any $(0,\infty)$-minimal conical partition and then  the local finiteness of the initial partition. If the partition is locally finite, we get upper and lower density bounds on the elements of the partitions and  the regularity theory obtained for finite partitions applies (see \cite[Theorem 1.1]{cm}) and we get that $\partial D$ is a $C^{1, \alpha}$ hypersurface in $\R^n$, for some $\alpha\in (0,1)$,  up to a closed singular set $\Sigma\neq \emptyset$ 
with $\mathcal{H}^{n-1}(\Sigma)=0$. and that $n=2$, then $\Sigma$ is a discrete set of points. 

Finally in order to pass from $C^{1,\alpha}$ to $C^\infty$ regularity, we need a bootstrap argument. Let $D$ be a generic fundamental domain, 
and denote $D_i=(D-i)\cap (0,1)^n$  for $i\in\Z^n$.  We denote 
$J(A, B)=\int_A\int_B \frac{1}{|x-y|^{n+s}}$,     and we observe that, due to the fact that $|D\cap (D+k)|=0$ 
for every $k\in\Z^n$ and that $\R^n=\cup_{k\in \Z^n}D+k$, 
there holds for $i,j,k\in \Z^n$, 
\begin{eqnarray*} \Per(D)&=&\sum_{i, j, k\neq j-i} J_s(D_i+i, D_j+i+k)= \sum_{i, j\in \Z^n}\int_{D_i}\int_{D_j}\sum_{k\in \Z^n, k\neq j-i}\frac{1}{|x-y-k|^{n+s}}dxdy\\&=&
\sum_{i, j\in \Z^n}\int_{D_i}\int_{D_j} K_{ij}(x,y)dxdy\qquad\text{ with } K_{ij}(x,y):=\sum_{k\in \Z^n, k\neq j-i}\frac{1}{|x-y-k|^{n+s}}.\end{eqnarray*}
Let us   fix $x\in \partial^* D_i\cap \partial^* D_j$.  Then the first variation of $\Per(D)$ at $x$ is given by
\begin{eqnarray*} H(x, D_i)&=&  \int_{\R^n} \left[\sum_{l}\sum_{k\in \Z^n, k\neq l-i}\ \frac{\chi_{D_l+k}(y)}{|x-y|^{n+s}} -\sum_{l} \sum_{k\in \Z^n, k\neq l-j}\frac{\chi_{D_l+k}(y)}{|x-y|^{n+s}}\right]dxdy  
\\ &= & \int_{\R^n} \left[\sum_{k\in \Z^n, k\neq j-i}\ \frac{\chi_{D_j+k}(y)}{|x-y|^{n+s}} - \sum_{k\in \Z^n, k\neq i-j}\frac{\chi_{D_i+k}(y)}{|x-y|^{n+s}}\right]dy   \\
&+&  \int_{\R^n} \left[\sum_{k\in \Z^n, k\neq 0}\ \frac{\chi_{D_i+k}(y)}{|x-y|^{n+s}}  -\int_{\R^n} \sum_{k\in \Z^n, k\neq 0}\frac{\chi_{D_j+k}(y)}{|x-y|^{n+s}}\right]dy \\
&+& \sum_{l\neq i, j}  \int_{\R^n}  \frac{\chi_{D_l+l-j}(y)}{|x-y|^{n+s}}-\sum_{l\neq i, j} \int_{\R^n}  \frac{\chi_{D_l+l-i}(y)}{|x-y|^{n+s}}dy\\
 &= & \int_{\R^n}  \ \frac{\chi_{D_j}(y)-\chi_{D_i}(y)}{|x-y|^{n+s}}dy   \\
&+&  \int_{\R^n}  \frac{\chi_{D_i+i-j}(y)}{|x-y|^{n+s}}dy-\int_{\R^n}  \frac{\chi_{D_j+j-i}(y)}{|x-y|^{n+s}}dy\\
&+& \sum_{l\neq i, j}  \int_{\R^n}  \frac{\chi_{D_l+l-j}(y)}{|x-y|^{n+s}}dy-\sum_{l\neq i, j} \int_{\R^n}  \frac{\chi_{D_l+l-i}(y)}{|x-y|^{n+s}}dy.
 \end{eqnarray*} 
If $D$ is an isoperimetric fundamental domain,  the equilibrium condition for $x\in \partial^*D_i\cap \partial^* D_j$ reads \[H(x, D_i)=H(x, D_j).\]  Reasoning as in  \cite[Theorem 5.1]{crs} (see also \cite[Theorem 2.6]{cn})  we get that $D$ satisfies in the viscosity sense  
\begin{eqnarray}\label{el} \int_{\R^n}  \ \frac{\chi_{D_j}(y)-\chi_{D_i}(y)}{|x-y|^{n+s}}dy = f(x, D)&:=&  \int_{\R^n}  \frac{\chi_{D_j+j-i}(y)- \chi_{D_i+i-j}(y)}{|x-y|^{n+s}}dy\\ & +& \sum_{l\neq i, j}  \int_{\R^n}  \frac{\chi_{D_l+l-i}(y)-\chi_{D_l+l-j}(y)}{|x-y|^{n+s}}dy\nonumber \end{eqnarray}  for all $x\in \partial^*D_i\cap \partial^* D_j$. 
  Note that since $i\neq j$, $l\neq i,j$  and $x\in \partial^* D_i\cap \partial^* D_j$ then  $|x-y|>1$ for $y\in D_j+j-i, D_i+i-j, D_l+l-i, D_l+l-j$ and then $f(\cdot,D)\in C^\infty(B_r(x))$, where $r>0$ is such that $B_r(x)$ does not contain singular points of $\partial D$. So, we may apply the bootstrap argument in \cite[Theorem 1.6]{bfv}  (see also \cite[Theorem 2.6]{cn}) to obtain the desired $C^\infty$ regularity. 
\end{proof} 

We conclude with a straightforward consequence of the previous result, about existence of fractional minimal cones. 
\begin{corollary}There exists a conical partition  of $\R^n$ with at least three phases which is locally minimal for the fractional perimeter. 
\end{corollary}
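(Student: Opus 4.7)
The plan is to realize the desired conical minimizer as a blow-up of an isoperimetric fundamental domain at one of its singular boundary points. I would begin by applying Theorem \ref{isothm} to obtain an isoperimetric fundamental domain $D\subset\R^n$ for the fractional perimeter, and consider the associated $\Z^n$-periodic partition $\{E_g\}_{g\in\Z^n}$ given by $E_g=gD$. Proposition \ref{partmin} says it is $(\Lambda,r)$-minimal for every $r<1/2$, and Theorem \ref{regfrac} gives that it is locally finite and that its singular set $\Sigma$ is non-empty. I would then fix $x_0\in\Sigma$ at which at least three of the $E_g$ accumulate; such a point exists because a $\Z^n$-periodic tiling of $\R^n$ by a single precompact fundamental domain must contain points where three or more translates of $D$ meet, and such multi-phase junction points lie in $\Sigma$.

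Next I would blow up at $x_0$ following Definition \ref{blowup}: set $E_g^{x_0,\varepsilon}:=(E_g-x_0)/\varepsilon$. The perimeter estimate of Lemma \ref{estimate} together with the $(n-s)$-homogeneity of the fractional perimeter bounds the rescaled perimeter of each phase uniformly on every fixed ball, so the compactness axiom of $\Per$ yields, along a subsequence $\varepsilon_k\downarrow 0$, limits $E_g^{x_0,\varepsilon_k}\to Q_g$ in $L^1_{\mathrm{loc}}(\R^n)$. Under the rescaling, the effective minimality parameter becomes $\Lambda\varepsilon_k^s\to 0$ on balls of radius $r/\varepsilon_k\to\infty$, and Lemma \ref{lsctheorem} combined with the standard competitor comparison for $(\Lambda,r)$-minimal partitions gives that the limit $\{Q_g\}$ is $(0,\infty)$-minimal. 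The conical character of the limit then comes from the fractional monotonicity formula of \cite[Theorem 3.10]{cm}, extended to countable partitions exactly as invoked in the proof of Theorem \ref{regfrac}; this forces each $Q_g$ to be a cone with vertex at the origin, after which Proposition \ref{conicalpart} makes the number of non-trivial phases finite.

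Finally, by the choice of $x_0$ as a multi-phase junction point, at least three of the $Q_g$ retain positive density at the origin after blow-up, so $\{Q_g\}$ is a conical $(0,\infty)$-minimal partition of $\R^n$ with at least three phases, yielding the corollary. I expect the main obstacle to be the careful selection of $x_0$: simply picking any $x_0\in\Sigma$ would leave open the possibility of a two-phase blow-up to a non-planar minimal cone, so one has to ensure the chosen singular point is a genuine triple junction. This can be done either topologically, using the fact that no tiling of $\R^n$ by translates of a single bounded set avoids triple contacts, or analytically, by combining the classification of two-phase fractional minimal cones with the fact that $\Sigma$ has positive codimension; the topological route is dimension-free and fits best into the framework developed in the paper.
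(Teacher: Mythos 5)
The overall shape of your argument — blow up the periodic minimal partition at a singular point and invoke the monotonicity formula to obtain a $(0,\infty)$-minimal cone — is the same as the paper's. But the crux of the corollary is precisely to produce a point at which the blow-up has at least three phases, and that is where your proposal has a genuine gap. You assert that ``a $\Z^n$-periodic tiling of $\R^n$ by a single precompact fundamental domain must contain points where three or more translates of $D$ meet,'' and then flag this as your main obstacle, offering two possible routes (topological and analytic) without carrying either one out. This claim is not a citable fact, and it is exactly the nontrivial content that needs proving; as stated your argument is circular at this point.

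The paper handles this differently and more concretely. It first reduces the whole corollary to $\R^2$ by observing that a locally minimal conical partition $\{C_i\}$ of $\R^2$ yields the cylindrical partition $\{C_i\times\R^{n-2}\}$ of $\R^n$, which is still a locally minimal cone. Then, in $\R^2$, it shows the singular set must be nonempty by a contradiction argument that is geometric rather than ``abstractly topological'': if $\partial D$ were everywhere regular it would consist of finitely many disjoint smooth Jordan curves; taking the connected component $D_i$ of $D$ of largest diameter and its exterior boundary curve $\gamma_i$, some translate $D+k_i$ must border $\gamma_i$ from the outside, and the component of $D+k_i$ enclosing $D_i$ would then have strictly larger diameter than $D_i$, contradicting maximality. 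Once a singular point exists in $\R^2$, the blow-up there cannot be a two-phase half-plane configuration (else the point would be regular), and two-phase fractional minimal cones in the plane are half-planes; so the blow-up cone automatically has at least three phases. Your ``analytic route'' amounts to this last observation, but it requires the $\R^2$ reduction to be available, which you skipped. Your ``topological route,'' stated dimension-free, would face the extra difficulty that in high dimensions one must also exclude two-phase singular cones at a singular point, something that is not free. In short: the idea of blowing up at a singular point is right, but you still owe a proof that $\Sigma\neq\emptyset$ and that the blow-up there is genuinely multi-phase; the paper supplies both via the planar reduction and the Jordan-curve/diameter argument.
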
 
\begin{proof} Let us reduce to the case of $\R^2$. Indeed if $\{C_i\}_{i\in\mathbb{I}}$ is a locally minimal conical partition in $\R^2$, then $\{C_i\times \R^{n-2}\}_{i\in\mathbb{I}}$ is a locally minimal conical partition
in $\R^n$ for $n>2$. 

By Theorem \ref{regfrac} every isoperimetric fundamental domain is bounded and smooth, up to a finite number of singular points. 
In order to conclude it is sufficient to show that every isoperimetric fundamental domain $D$ has at least one singular point: if it is the case, the $L^1$ limit of the blow-up of the partition generated by $D$ at one of this singular points is a locally minimal conical partition with at least three phases. 

Assume by contradiction that there exists an isoperimetric fundamental domain $D$ which has no singular points. Let $\partial D=\cup_{i=1}^N \gamma_i$, where each  $\gamma_i$ is a Jordan curve and $N\geq 1$. So, $D$ is the union of $M\leq N$ bounded connected   components. For every $i=1,\dots, N$, there exists  an integer  translation $D+k_i$ of $D$  such that $\gamma_i\in \partial (D+k_i)$.   Let us take the connected component $D_i$ of $D$ with biggest diameter and let $\gamma_i$ its exterior boundary (i.e. the boundary of the unbounded component of the complement of $D_i$). Then there exists an integer  translation $D+k_i$ of $D$  such that 
$\gamma_i\in \partial (D+k_i)$. But then at least one connected component of $D+k_i$ would have diameter bigger than $D_i$, giving a contradiction.  \end{proof} 
 
 \section{Anisotropic minimal partitions of the plane} \label{secplane}
 
In this section we reduce to $X=\R^2$, equipped with the Lebesgue measure and with the Euclidean distance, and we fix $G= \Z^2$ 
(but the same discussion applies to any discrete group of translations).
We shall also assume that $\Per$ is   the anisotropic perimeter induced by a  spatially homogeneous norm $\phi$ on $\R^2$,   that is,
\[\Per(E)=\int_{\partial^* E} \phi(\nu(x))dH^1(x).\]
 We will denote by $\phi^*$ the dual of $\phi$, that is
$\phi^*(x)=\sup\{x\cdot y\ :\ \phi(y)\leq 1\}$, and $W_\phi=\{x\in\R^2, \phi^*(x)\leq 1\}$ will be the Wulff shape. 

\begin{proposition} \label{probounded}
Every isoperimetric fundamental domain $D$ is bounded and satisfies
\[
{\rm diam}(D)\le \sqrt 2 + \frac{\Per(D)}{2}.
\]
\end{proposition}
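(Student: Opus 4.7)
The plan is to prove the diameter bound by a projection argument onto the diameter direction, combining a coarea-type estimate for the perimeter with a lattice-geometric control on the ``gaps'' in the projection; the constant $\sqrt 2$ will appear as the diameter of the unit cube $[0,1]^2$, which is itself a fundamental domain of $\Z^2$. Boundedness is then an immediate corollary of the diameter bound and the finiteness of $\Per(D)$.

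First I fix $x_0, y_0 \in \overline D$ with $|x_0 - y_0| = R := \diam(D)$. If $R \leq \sqrt 2$ the inequality is trivial, so I assume $R > \sqrt 2$. Set $u = (y_0 - x_0)/R$, let $L_t = \{p \in \R^2 : p \cdot u = t\}$, and denote by $\pi_u$ the projection $p \mapsto p \cdot u$. After translation I may also assume $\pi_u(x_0) = 0$, $\pi_u(y_0) = R$.

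The first ingredient is a projection lower bound on the perimeter: by the coarea formula applied to $p \mapsto p \cdot u$, for a.e.\ $t$ in the (relative) interior of $\pi_u(D)$ the slice $L_t \cap D$ is nonempty, so $\partial^* D \cap L_t$ contains at least two points (one at each entry/exit). Hence
\[
\int_{\partial^* D} |\nu \cdot u|\, d\mathcal H^1 \geq 2\, \mathcal H^1(\pi_u(D) \cap [0,R]).
\]
Combined with the elementary fact $\phi(\nu) \geq |\nu \cdot u|$ (under the standard normalization of the norm $\phi$ making $u$ admissible), this yields
\[
\Per(D) \geq 2\, \mathcal H^1(\pi_u(D) \cap [0,R]).
\]

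The heart of the proof is then to show
\[
\mathcal H^1(\pi_u(D) \cap [0,R]) \geq R - \sqrt 2.
\]
I would argue by contradiction: suppose $(a,b) \subset [0,R] \setminus \pi_u(D)$ is a connected component of the complement with $b-a > \sqrt 2$. Then the open strip $S = \{p : p\cdot u \in (a,b)\}$ is disjoint from $D$. But since $b-a$ exceeds $\sqrt 2 \geq |u_1|+|u_2|$, the strip $S$ contains a whole translate of the unit cube $[0,1]^2$. Since $[0,1]^2$ is itself a fundamental domain of $\Z^2$, every point in this cube has a unique $\Z^2$-representative in $\overline D$; unwinding this against $S \cap D = \emptyset$ and the tiling property $\bigsqcup_{z\in\Z^2}(D+z) = \R^2$ leads to a contradiction with the disjointness of distinct translates of $D$.

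Combining the two estimates gives
\[
\Per(D) \geq 2\,(R - \sqrt 2),
\]
i.e.\ $R \leq \sqrt 2 + \Per(D)/2$, which is the desired bound. Boundedness of $D$ then follows since the isoperimetric fundamental domain has $\Per(D) < \infty$, so $\diam(D) < \infty$.

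The main obstacle is the gap estimate $\mathcal H^1([0,R] \setminus \pi_u(D)) \leq \sqrt 2$: here the planar and lattice-geometric structure is essential, and the constant $\sqrt 2$ enters precisely as $\diam([0,1]^2)$, i.e.\ as the diameter of the ``canonical'' fundamental domain of $\Z^2$. Any gap exceeding this threshold would leave room to place a reference cube whose lattice orbit cannot be consistently realized inside $D$, giving the contradiction.
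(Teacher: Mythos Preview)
Your proof has a genuine gap at the ``gap bound'' step $\mathcal H^1([0,R]\setminus \pi_u(D))\le\sqrt 2$. First, you only argue that no \emph{single} connected gap can exceed $\sqrt 2$, which would not bound the total gap even if it were true. Second, the single-gap argument itself fails: the contradiction you announce does not occur. Take $D=\big([0,\tfrac12)\times[0,1)\big)\cup\big([N+\tfrac12,N+1)\times[0,1)\big)$ for a large integer $N$; this is a genuine fundamental domain for $\Z^2$ (obtained from $[0,1)^2$ by shifting its right half by the lattice vector $(N,0)$), yet it is disjoint from a vertical strip of width $N$. The tiling and disjointness properties remain intact --- the strip is simply covered by translates $D+z$ with $z\ne 0$ --- so placing a unit cube inside the strip yields no contradiction. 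Since your argument for this step never invokes the minimality of $D$, it cannot exclude such configurations.

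The paper's proof takes a different route that handles exactly this phenomenon: decompose $D$ into indecomposable components $D_i$ (via \cite{acmm}), use the planar bound $\diam(D_i)\le\Per(D_i)/2$ for each, and then translate each $D_i$ by a lattice vector $z_i\in\Z^2$ so that it meets $[0,1]^2$. The resulting set $\widetilde D=\bigcup_i(D_i+z_i)$ is again an isoperimetric fundamental domain with the same perimeter, and now $\diam(\widetilde D)\le\sqrt 2+\Per(D)/2$. The lattice translation of components is the missing idea; without it there is no control on how far apart the pieces of $D$ can lie. (A minor side issue: your inequality $\phi(\nu)\ge|\nu\cdot u|$ requires $\phi^*(u)\le 1$, which is not automatic for a general anisotropic $\phi$ and a Euclidean-unit $u$.)
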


\begin{proof} 
Let $D$ be an isoperimetric fundamental domain. Since $D$ has finite perimeter,
by \cite[Theorem 1]{acmm}  it can be decomposed in a finite or countable family of {\it indecomposable components} $(D_i)_i$ such that 
$|D_i\cap D_j|=0$ for $i\ne j$, and $\Per(D)=\sum_i \Per(D_i)$. Up to choosing a suitable representative for the components, we can assume that 
$D_i\cap D_j=\emptyset$ for $i\ne j$, which in turn implies that 
\[
D_i\cap (D_j+z)=\emptyset \qquad \text{for $i\ne j$ and for $z\in\mathbb Z^2$.}
\]
In particular, for all $i$ there exists $z_i\in \mathbb Z^2$ such that $\widetilde D_i = D_i+z_i$ intersects $[0,1]^2$,
and $\widetilde D=\cup_i \widetilde D_i$ is still an isoperimetric fundamental domain.

By \cite[Lemma 2.13]{dmnp} we also have that each component $D_i$ is bounded and satisfies
\[
{\rm diam}(D_i)={\rm diam}(\widetilde D_i)\le \frac{\Per(D_i)}{2}\le \frac{\Per(D)}{2},
\]
which gives that
\[
{\rm diam}(\widetilde D)\le {\rm diam}([0,1]^2) + \frac{\Per(D)}{2} =  \sqrt 2 + \frac{\Per(D)}{2}.
\]
\end{proof} 

\begin{proposition} \label{proregular}
The boundary of an isoperimetric fundamental domain is composed by a finite number of Lipschitz {\it edges}, 
which minimize the anisotropic length, joining a finite number of {\it vertices} where up to four edges may concur.
\end{proposition}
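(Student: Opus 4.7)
The plan is to combine Proposition~\ref{probounded} (boundedness of $D$) with the local minimality of the induced partition coming from Proposition~\ref{partmin}, apply the planar structure theory for sets of finite perimeter to identify the edges, and finally carry out a blow-up/Steiner-type analysis at singular points to bound the vertex degree.

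First, by Proposition~\ref{probounded} the domain $D$ is bounded, hence by Proposition~\ref{locallyfinite} the induced $\Z^2$-periodic partition $\{D+z\}_{z\in\Z^2}$ is locally finite. In particular only finitely many $z\in\Z^2\setminus\{0\}$ satisfy $\mathcal H^1(\partial^*D\cap\partial^*(D+z))>0$, and up to an $\mathcal H^1$-null set
\[
\partial D \;=\; \bigcup_{z\in\mathcal Z}\Gamma_z,\qquad \Gamma_z:=\partial^*D\cap\partial^*(D+z),\quad|\mathcal Z|<\infty.
\]
Applying the decomposition of planar sets of finite perimeter (e.g.\ the indecomposable decomposition of \cite{acmm}), each $\Gamma_z$ is $\mathcal H^1$-equivalent to a finite union of Jordan arcs. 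I would then call a \emph{vertex} any point in $\Gamma_z\cap\Gamma_{z'}$ with $z\ne z'$ (i.e.\ a point where three or more cells of the partition meet) and an \emph{edge} a maximal Jordan arc of $\Gamma_z$ between two consecutive vertices; local finiteness guarantees finitely many of each.

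Next, to show that each edge $\gamma\subset\Gamma_z$ minimizes the anisotropic length between its endpoints, I fix a ball $B$ of radius $<1/2$ containing $\gamma$ and no other vertex, and an arbitrary Lipschitz competitor $\tilde\gamma$ with the same endpoints and contained in $B$. Replacing the interface $\gamma$ between $D$ and $D+z$ by $\tilde\gamma$, and translating the modification by $\Z^2$, produces an admissible competitor for the $(0,r)$-minimality statement of Proposition~\ref{partmin} (recall that $\Lambda=0$ in the local case). The resulting inequality reduces to
\[
\int_\gamma\phi(\nu)\,d\mathcal H^1\;\le\;\int_{\tilde\gamma}\phi(\nu)\,d\mathcal H^1,
\]
which is exactly the anisotropic length minimization and, by standard regularity for $\phi$-length minimizers in the plane, yields the Lipschitz regularity of~$\gamma$.

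For the vertex-degree bound, fix a vertex $p$ and blow up the partition at $p$ as in the proof of Proposition~\ref{conicalpart}: Lemma~\ref{estimate}, the compactness property of $\Per$ and the lower semicontinuity Lemma~\ref{lsctheorem} produce a subsequential $L^1_{\text{loc}}$-limit which is a $(0,\infty)$-minimal conical partition $\{Q_i\}_{i=1}^k$ of $\R^2$, where $k$ is the number of edges meeting at $p$. A planar variational argument (cf.\ \cite{taylor,maggi}) then rules out $k\ge 5$: splitting $p$ into two vertices of strictly smaller degree joined by a short $\phi$-geodesic strictly decreases the total anisotropic length by the triangle inequality for $\phi$, contradicting minimality. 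Thus $k\le 4$.

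The main obstacle is precisely this last step: establishing the sharp bound $k\le 4$ in the anisotropic setting. The isotropic Plateau-type result $k=3$ must be relaxed, since for a general (possibly non-strictly-convex or non-differentiable) norm $\phi$ genuine four-edge vertices do occur (e.g.\ in parallelogram tilings); one must therefore verify that the vertex-splitting procedure yields strict improvement whenever $k\ge 5$, regardless of the regularity of the Wulff shape $W_\phi$.
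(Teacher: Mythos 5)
The paper's own proof of Proposition~\ref{proregular} is a single appeal to \cite[Theorem 4.1]{morgan}, which already supplies the entire structural statement (Lipschitz $\phi$-length-minimizing arcs, finitely many vertices, degree at most four) once local finiteness of the partition is known via Proposition~\ref{locallyfinite}. You instead try to rebuild this structure from scratch, and two steps of your argument have genuine gaps.

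First, the blow-up step is not justified for the anisotropic perimeter. You assert that Lemma~\ref{estimate}, compactness, and Lemma~\ref{lsctheorem} yield a subsequential limit that is a $(0,\infty)$-minimal \emph{conical} partition. Compactness and semicontinuity give that the limit is a $(0,\infty)$-minimal partition, but not that it is conical; passing from ``blow-up limit'' to ``cone'' in the paper's framework relies on a monotonicity formula (see the remarks before Proposition~\ref{conicalpart} and Theorems~\ref{teoteo}, \ref{regfrac}), and the introduction explicitly points out that no such formula is available for the anisotropic perimeter. So this piece of the argument is circular unless you supply a separate planar mechanism for conicality, which is in fact what the Morgan--French--Greenleaf proof does through direct planar topology rather than monotonicity. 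Relatedly, your appeal to \cite{acmm} is not quite on target: \cite{acmm} decomposes a set of finite perimeter into indecomposable components and describes the boundary of each as a union of rectifiable Jordan curves; it does not directly say that an interface $\Gamma_z=\partial^*D\cap\partial^*(D+z)$ between two specific cells is a finite union of Jordan arcs, which would need further justification.

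Second, you explicitly leave the vertex-degree bound $k\le 4$ as an unresolved gap, noting that the vertex-splitting argument must be carried out for possibly non-strictly-convex or non-differentiable $\phi$. This is precisely the delicate point that \cite[Theorem 4.1]{morgan} handles, and without it the proposition is not proved. In short, your outline identifies the right ingredients, but both the conicality of the blow-up and the degree bound remain to be established, and the clean way to close both gaps is the route the paper takes: invoke the planar Wulff-cluster structure theorem of Morgan, French, and Greenleaf directly once local finiteness is in hand.
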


\begin{proof} 
By Proposition \ref{locallyfinite}, the $\mathbb Z^2$-periodic partition induced by $D$ is locally finite. 
As a consequence, by \cite[Theorem 4.1]{morgan} such partition is composed by a family of Lipschitz curves meeting at singular points
which are locally finite. Moreover each curve is a Lipschitz graph locally minimizing the anisotropic length, 
and at each singular point do concur three or four curves. 
\end{proof} 

\begin{proposition} \label{proconvex}
Assume that  $(\phi)^2$ is strictly convex. Then every isoperimetric fundamental domain is either a centrally symmetric convex hexagon or parallelogram. 
\end{proposition}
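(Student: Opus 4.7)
The plan is to combine Propositions \ref{probounded} and \ref{proregular} with an Euler count on the flat torus $\T^2=\R^2/\Z^2$ and a zonotope argument. First, by Proposition \ref{proregular} the boundary $\partial D$ is a finite union of Lipschitz edges, each minimizing $\int\phi(\gamma')\,dt$, joining finitely many vertices where three or four edges meet. Since $\phi$ is spatially homogeneous and $\phi^2$ is strictly convex, the triangle inequality for the Finsler length forces every length-minimizing arc between two points to be the unique straight segment, so $\partial D$ is a Euclidean polygon.

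Next, I would push the periodic tiling $\{D+z\}_{z\in\Z^2}$ down to $\T^2$ to obtain a CW-decomposition with $V$ vertices, $E$ edges and (after reducing to the connected case via the indecomposable-component decomposition used in the proof of Proposition \ref{probounded}) a single $2$-cell. Writing $V_3,V_4$ for the $3$- and $4$-valent vertex counts, the handshake identity (each edge bounds the single face on both sides) reads $3V_3+4V_4=2E$, while $\chi(\T^2)=0$ gives $V_3+V_4-E+1=0$. Eliminating $E$ yields $V_3+2V_4=2$, so $(V_3,V_4)$ is either $(2,0)$ with $E=3$ or $(0,1)$ with $E=2$; since each torus edge is traversed twice by $\partial D$, the latter has either $6$ or $4$ sides.

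Central symmetry and convexity then follow from a lifting argument. Fix a torus edge $\bar e$; its two lifts on $\partial D$ lie in $\partial D\cap \partial(D+z)$ and $\partial D\cap\partial(D-z)$ respectively, hence are parallel congruent segments related by the translation $-z$, and with $\partial D$ oriented as a Jordan curve they appear with opposite orientations. Listing the oriented edge-vectors of $\partial D$ in cyclic order therefore gives $v_1,\dots,v_k,-v_1,\dots,-v_k$ with $k\in\{2,3\}$. Since $\partial D$ is a simple closed curve, the cyclic order must match the angular order of $\{\pm v_i\}$, which identifies $D$ up to translation with the Minkowski sum $[0,v_1]+\cdots+[0,v_k]$. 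Such zonotopes in $\R^2$ are precisely the centrally symmetric convex parallelograms ($k=2$) and hexagons ($k=3$).

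The main obstacle is the topological bookkeeping in the middle step: one must rigorously justify that the fundamental domain may be taken connected (so $F=1$ in Euler's formula) and that each torus edge contributes exactly two antipodal boundary segments to $\partial D$, and then check that the Jordan-curve condition on $\partial D$ rules out the non-convex, self-intersecting orderings of $\{\pm v_i\}$. Strict convexity of $\phi^2$ enters only in the first step, to guarantee that the edges are straight segments; the remaining argument is purely combinatorial-topological and does not require further regularity of the anisotropy.
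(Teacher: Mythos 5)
Your approach is genuinely different from the paper's: you go through an Euler-characteristic count on $\T^2$ and a zonotope classification (essentially re-proving Fedorov's theorem), whereas the paper first establishes convexity of $D$ by a \emph{local} comparison argument at each vertex --- replacing $\mathrm{int}(D)\cap B_r(v)$ by its convex hull strictly decreases the anisotropic perimeter unless $D\cap B_r(v)$ is already a convex sector --- and then invokes Fedorov's theorem as a black box. The first step of your argument (strict convexity of $\phi^2$ forces edges to be straight) is the same as the paper's.

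However, there is a genuine gap in the last step, and it is not merely ``bookkeeping.'' The claim that the Jordan-curve condition on $\partial D$ forces the $\pm$-paired cyclic edge sequence $v_1,\dots,v_k,-v_1,\dots,-v_k$ to respect the angular order of the vectors (and hence yields a convex zonotope) is false in general. For instance, the hexagon with vertices $(0,0)$, $(2,0)$, $(3,-1)$, $(3,2)$, $(1,2)$, $(0,3)$ has edge vectors in cyclic order $v_1,v_2,v_3,-v_1,-v_2,-v_3$ with $v_1=(2,0)$, $v_2=(1,-1)$, $v_3=(0,3)$; it is a simple closed polygon, yet it has a reflex vertex at $(2,0)$ and so is not convex. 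Thus simplicity alone does not exclude non-convex orderings, and you cannot close the argument without re-invoking the minimality of the partition. Similarly, the reduction to a connected $D$ (so that $F=1$ in the Euler formula) requires an argument and does not follow from the indecomposable-component decomposition of Proposition \ref{probounded}, which only gives boundedness. The paper's local convex-hull comparison simultaneously delivers local convexity of $D$ at each vertex (hence both connectedness and global convexity), after which Fedorov's theorem finishes the proof; your route needs an analogous use of minimality to establish convexity before the zonotope classification can be applied.
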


\begin{proof} 
Recalling Proposition \ref{proregular}, we observe that the boundary of an isoperimetric fundamental domain $D$ is composed by a finite number of segments, joining a finite number of vertices. 

We claim that $D$ is a convex polygon. Indeed, letting $v$ be a vertex of $D$,
the periodic minimal partition induced by $D$ in $B_r(v)$, with $r>0$ small enough, is given by segments with one endpoint in $v$ and the other on $\partial B_r(v)$. The number of such segments is less than or equal to four, and the number of components of ${\rm int}(D) \cap B_r(v)$ is one or two.
Notice that, if we replace ${\rm int}(D) \cap B_r(v)$ with its convex envelope, the perimeter of the partition decreases, it follows that 
$D \cap B_r(v)$ is a convex circular sector, and the same applies to the other regions of the partition. It follows that 
the number of such regions is three or four, and that $D$ is a convex polygon, as claimed.

Finally, a classical result by Fedorov \cite{fedorov} gives that a planar fundamental domain which is also a convex polygon is necessarily a centrally symmetric hexagon or parallelogram.
\end{proof} 
 
 By approximating a general norm with differentiable norms, from Proposition \ref{proconvex}
 we obtain the following result.
 
 \begin{proposition} \label{procongen}
 For any norm $\p$, there exists an isoperimetric fundamental domain 
given by a centrally symmetric convex hexagon or parallelogram. 
\end{proposition}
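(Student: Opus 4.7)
The plan is to approximate the general norm $\phi$ by a sequence of norms satisfying the hypotheses of Proposition \ref{proconvex}, and then pass to the limit in the associated isoperimetric problems.

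First I would fix a sequence $\phi_k$ of norms on $\R^2$ such that $(\phi_k)^2\in C^2(\R^2)$ is strictly (in fact uniformly) convex and $\phi_k\to \phi$ uniformly on compact sets as $k\to +\infty$. Such an approximation can be obtained for instance by convolving $\phi$ with a smooth mollifier and adding a small multiple of the Euclidean norm squared, then taking the positively $1$-homogeneous envelope; the point is simply that $\phi_k$ converges uniformly to $\phi$ on the unit circle, hence $\phi_k\to \phi$ locally uniformly, and there exist constants $0<c\le C<+\infty$ independent of $k$ with $c|\nu|\le \phi_k(\nu)\le C|\nu|$ for all $\nu\in\R^2$.

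By Proposition \ref{proconvex}, for each $k$ there exists an isoperimetric fundamental domain $D_k$ for the anisotropic perimeter $\Per_{\phi_k}$ which is a centrally symmetric convex hexagon or parallelogram. Since $D_k$ has unit volume and is convex with uniformly bounded $\phi_k$-perimeter (e.g.\ tested against the unit square, whose $\phi_k$-perimeter is bounded by $4C$), the uniform equivalence of the norms gives a uniform bound on the Euclidean perimeters, and hence by Proposition \ref{probounded} a uniform bound on the diameters of $D_k$. Up to translating each $D_k$ by an element of $\Z^2$ (which preserves the property of being an isoperimetric fundamental domain), we may assume $D_k\cap [0,1]^2\ne\emptyset$, so the $D_k$'s are contained in a common compact set.

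Next I would extract a subsequence of $D_k$ converging in Hausdorff distance, equivalently in $L^1$, to a compact convex set $D_0$. Because the class of centrally symmetric convex hexagons and parallelograms is closed under Hausdorff limits (a degenerate hexagon, where two opposite pairs of parallel sides collapse, is a parallelogram), $D_0$ is itself a centrally symmetric convex hexagon or parallelogram. By $L^1$ continuity $|D_0|=1$ and $|g D_0\cap D_0|=0$ for every $g\in G\setminus\{id\}$, so by Corollary \ref{volumecoro} (applied to compare with any reference fundamental domain) $D_0$ is a fundamental domain.

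It remains to check that $D_0$ is isoperimetric for $\Per_\phi$. Given any competitor fundamental domain $\widetilde D$ with $\Per_\phi(\widetilde D)<+\infty$, the uniform convergence $\phi_k\to\phi$ on the unit sphere implies $\Per_{\phi_k}(\widetilde D)\to \Per_\phi(\widetilde D)$, while the lower semicontinuity of the anisotropic perimeter together with the uniform convergence of the integrands yields
\[
\Per_\phi(D_0)\le \liminf_k \Per_{\phi_k}(D_k) \le \liminf_k \Per_{\phi_k}(\widetilde D) = \Per_\phi(\widetilde D).
\]
The main delicate point is this last display: one must verify that the lower semicontinuity of $\Per_{\phi_k}(\cdot)$ combined with $\phi_k\to\phi$ gives $\Per_\phi(D_0)\le \liminf_k \Per_{\phi_k}(D_k)$. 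This is a standard $\Gamma$-liminf argument for anisotropic perimeters (using that $\phi_k\ge \phi - \eta_k$ on the unit sphere for an infinitesimal sequence $\eta_k\to 0$, and the lower semicontinuity of $\Per_\phi$), and it constitutes the only real technical step of the proof.
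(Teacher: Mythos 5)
Your proof is correct and follows the same approximation strategy that the paper only sketches in a single sentence preceding the statement; you have filled in the details (uniform diameter bound via Proposition \ref{probounded}, Hausdorff compactness, stability of the class of centrally symmetric hexagons/parallelograms under degeneration, and the $\Gamma$-liminf comparison using $\phi_k \ge \phi - \eta_k$ together with the uniform Euclidean perimeter bound). One small slip: to conclude that $D_0$ is a fundamental domain you should invoke the unnumbered corollary following Corollary \ref{volumecoro}, which states that a set satisfying the non-overlap condition and having full measure is itself a fundamental domain, rather than Corollary \ref{volumecoro} itself.
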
 

 \begin{remark}\upshape 
 Even if the isoperimetric fundamental domain in Proposition \ref{procongen} might be nonunique for a general $\p$, if the Wulff shape $W_\p$ is
 a hexagon or a parallelogram which tessellate the plane, then the only isoperimetric fundamental domain is given by $W_\p/|W_\p|$. This follows from the fact that the Wulff shape is the unique volume-constrained minimizer, up to translations, of the anisotropic perimeter (see \cites{fm,maggi}).
 \end{remark}
 
 We conclude by noticing that, under some assumptions on the norm $\p$, we can exclude parallelograms as possible minimizers.
 
\begin{proposition} 
If there $\phi^2$  is strictly convex and differentiable, then the only isoperimetric fundamental domains are hexagons.
\end{proposition}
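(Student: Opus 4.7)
Assume by contradiction that an isoperimetric fundamental domain is a parallelogram $D$ with edges parallel to unit vectors $\tau_1,\tau_2$, so that the lattice is generated by $e_i=|e_i|\tau_i$; let $\nu_i:=\tau_i^{\perp}$ denote the outward normals to the two pairs of opposite edges. The $\Z^2$-periodic partition induced by $D$ has a single $4$-fold junction at every lattice point, with outgoing tangents $\pm\tau_1,\pm\tau_2$. The strategy is to construct a family of hexagonal competitors by \emph{splitting} each such junction into two $3$-fold junctions, and to show that the first variation of the anisotropic perimeter is strictly negative for an appropriate choice of splitting direction and combinatorics.

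Concretely, fix $w\in\R^2$ and $\epsilon>0$ small; at each lattice vertex $v$, introduce the two new points $v_{\pm}:=v\pm\epsilon w$, join them by the short segment $[v_-,v_+]$, and reroute the four incident edges so that two terminate at $v_+$ and two at $v_-$. Up to relabeling, there are two non-crossing ways of doing this: \emph{pairing $A$}, which sends $\pm\tau_i$ to $v_{\pm}$ for both $i=1,2$, and \emph{pairing $B$}, which sends $\tau_1,-\tau_2$ to $v_+$ and $-\tau_1,\tau_2$ to $v_-$. Because the construction is performed identically at every lattice point, the resulting set $D_\epsilon^{A/B}$ is a hexagonal fundamental domain. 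Writing down its vertices explicitly and using $\phi(\lambda u)=\lambda\phi(u)$ together with $\phi(u+\epsilon h)=\phi(u)+\epsilon D\phi(u)\cdot h+o(\epsilon)$ yields
\begin{equation*}
\Per(D_\epsilon^{A/B})-\Per(D)\;=\;4\epsilon\bigl[\phi(w^{\perp})-\bigl(D\phi(\nu_1)\pm D\phi(\nu_2)\bigr)\cdot w^{\perp}\bigr]+o(\epsilon),
\end{equation*}
with the $+$ sign for pairing $A$ and the $-$ sign for pairing $B$. By the definition of the dual norm, pairing $A$ (resp.\ $B$) admits a direction $w$ producing a strictly negative first-order variation if and only if
\begin{equation*}
\phi^*\bigl(D\phi(\nu_1)+D\phi(\nu_2)\bigr)>1\quad\bigl(\text{resp.\ }\phi^*\bigl(D\phi(\nu_1)-D\phi(\nu_2)\bigr)>1\bigr).
\end{equation*}

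The crux of the argument is to show that at least one of these two inequalities must hold. Assume both fail, and set $a:=D\phi(\nu_1)$ and $b:=D\phi(\nu_2)$. Since $\phi$ is differentiable and $1$-homogeneous, the standard duality identity gives $\phi^*(D\phi(\nu))=1$ for every $\nu\ne0$, so in particular $\phi^*(a)=\phi^*(b)=1$. The triangle inequality then yields
\begin{equation*}
2=2\phi^*(a)=\phi^*\bigl((a+b)+(a-b)\bigr)\le\phi^*(a+b)+\phi^*(a-b)\le 2,
\end{equation*}
forcing equality throughout. The strict convexity of $\phi^*$, which follows from the differentiability of $\phi$ (itself implied by our assumption on $\phi^2$), then forces $a+b$ and $a-b$ to be positively proportional, whence $a$ and $b$ are Euclidean-collinear. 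Since the gradient map $D\phi\colon\{\phi=1\}\to\{\phi^*=1\}$ is a bijection and $D\phi$ is odd (because $\phi$ is even), collinearity of $a,b$ forces $\nu_1=\pm\nu_2$, contradicting the non-parallelism of the two edge directions of $D$. Hence one of the pairings yields a hexagonal fundamental domain with strictly smaller anisotropic perimeter than $D$, contradicting the minimality of $D$.

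The main technical obstacle lies in producing the first-order expansion above. This requires listing the six vertices of $D_\epsilon^{A/B}$ explicitly (each lattice point of $D$ contributes either a single split point or both split points joined by the new edge, depending on the pairing), observing that the two sides parallel to $\tau_1$ have perturbed length $|e_1\mp2\epsilon w|$ while the two sides parallel to $\tau_2$ have perturbed length $|e_2\mp 2\epsilon w|$ (with a sign determined by the pairing), and inserting two new edges of length $2\epsilon|w|$ and normal parallel to $w^{\perp}$. The pairing only affects the sign in front of the $D\phi(\nu_2)\cdot w^{\perp}$ term, which is precisely what makes one of the two pairings effective by the duality argument above.
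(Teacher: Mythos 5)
Your proof is correct and rests on the same geometric idea as the paper's: a parallelogram tile forces a $4$-fold junction at each lattice vertex, and splitting this junction into two $3$-fold junctions (producing a hexagonal competitor) in one of the two possible combinatorial ways must strictly decrease the anisotropic perimeter, with the contradiction coming from strict convexity. The executions, however, differ. You keep the splitting vector $w$ free, Taylor-expand, and observe that a given pairing admits a strictly negative first-order variation for some $w$ exactly when $\phi^*(D\phi(\nu_1)\pm D\phi(\nu_2))>1$; you then rule out the possibility that both inequalities fail via the identity $\phi^*(D\phi(\nu))=1$, the triangle inequality for $\phi^*$, and strict convexity of $\phi^*$ (correctly deduced from differentiability of $\phi$). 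The paper instead fixes the splitting direction to be the Euclidean bisector of the two incoming tangents, carries out the expansion in explicit trigonometric coordinates, and upon summing the two resulting first-order inequalities arrives at $2[\phi(\nu_1)+\phi(\nu_2)]\le\phi(\nu_1+\nu_2)+\phi(\nu_1-\nu_2)$, contradicting strict convexity of $\phi$ directly. Your dual-norm formulation avoids the trigonometry and makes transparent why the two pairings together exhaust all obstructions, while the paper's is more concrete at the price of a seemingly arbitrary choice of direction. One small streamlining in your equality case: since $\phi^*(a+b)=\phi^*(a-b)=1$ already excludes $a\pm b=0$, strict convexity of $\phi^*$ gives that $a+b$ and $a-b$ are positive multiples of each other with equal $\phi^*$-norm, hence equal, so $b=0$; this reaches a contradiction one step earlier than passing through collinearity of $a$ and $b$.
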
 

\begin{proof} To prove that the isoperimetric fundamental domains are hexagons, it is sufficient to show that crosses are not locally minimal for the perimeter. 

For the proof we refer  to the following  figure and we show that it is not possible that both  the perimeter of the curve $EO\cup OC $ is bigger than the perimeter of the   curve $ED\cup DO\cup DC$ and the perimeter of the curve $BO\cup OC $ is bigger than the perimeter of the curve
 $OA\cup AC\cup AB$. We argue by contradiction. 
 
 Let us call $\alpha$ the angles $BOA=BOC$, and let us fix $OC=OB=1$ and $OA=c<1$. \\
\begin{picture}(8,5)(-2,-1) 
\thicklines
\put(-1.5,0){\vector(1,0){14}}
\put(11,-0.2){$\tau_1$}
\put(2.7,-1){\vector(4,3){6}}
\put(4,0){\circle*{0.18}}
\put(4,-0.5){$O$}
\put(8,3.3){$\tau_2$}
\put(4,0){\color{red}\line(3,1){3.15}}
\put(6,0.4){$\tau_3$}
\put(5.1,0.1){\color{red}$\alpha$}
\put(7.11,1){\color{red}\line(1,2){1.05}}
\put(7.11,1){\color{red}\line(3,-1){3}}
\put(7.7, 2){$\tau_5$}
\put(7.11,1.01){\color{red}\circle*{0.15}}
\put(7.2,1.1){$A$}
\put(8.5, 0.6){$\tau_4$}
\put(4,0){\color{blue}\line(-2,5){0.5}}
\put(2.5,0.1){\color{blue}$\pi/2-\alpha$}
\put(3.5,1.2){\color{blue}\line(5,2){4.5}}
\put(3.5,1.2){\color{blue}\circle*{0.15}}
\put(3.5,1.4){$D$}
\put(3.5,1.2){\color{blue}\line(-4,-1){4.7}}
\put(8.2,2.9){$C$}
\put(10,-0.5){$B$}
\put(-1.4,-0.5){$E$}
\end{picture}

Let us denote $\nu_i$ one of the two   vectors such that $\nu_i\cdot \tau_i=0$,  $|\nu_i|=1$.  
We have that \begin{eqnarray*} \nu_1&=&(0,1)\\ \nu_2&=&(-\sin 2\alpha, \cos 2\alpha)\\\nu_3&=&\frac{\nu_1+\nu_2}{|\nu_1+\nu_2|}=\frac{1}{2\cos\alpha} (-\sin 2\alpha, \cos 2\alpha+1)\\\nu_4&=&\nu_4(c)=\frac{1}{\sqrt{1+c^2-2c\cos\alpha}}(c\sin\alpha, 1-c\cos \alpha)\\\nu_5&=&\nu_5(c)=\frac{1}{\sqrt{1+c^2-2c\cos\alpha}}[(1-c\cos \alpha)\nu_2+c\sin\alpha \tau_2]\end{eqnarray*}  where we used the fact that $\sqrt{2+2\cos 2\alpha}=2\cos\alpha$. 
 For $c>0$ sufficiently small we have that
 \begin{eqnarray*} \phi(\nu_4(c))&=&\phi(\nu_1)+c\sin\alpha \nabla \phi(\nu_1)\cdot \tau_1\\ \phi(\nu_5(c))&=&\phi(\nu_2)+c\sin\alpha \nabla \phi(\nu_2)\cdot \tau_2\end{eqnarray*}
 Then the perimeter of the curve $OA\cup AC\cup AB$ is given by
 \begin{eqnarray*} \Per(OACB) &= & c\phi(\nu_3)+\sqrt{1+c^2-2c\cos \alpha}\phi(\nu_4)+ \sqrt{1+c^2-2c\cos \alpha}\phi(\nu_5)\\
 &=&\frac{ c}{2\cos\alpha} \phi(\nu_1+\nu_2) +\sqrt{1+c^2-2c\cos \alpha}[\phi(\nu_1)+\phi(\nu_2)]\\ 
 && +c\sin\alpha \sqrt{1+c^2-2c\cos \alpha}[\nabla \phi(\nu_1)\cdot \tau_1+\nabla \phi(\nu_2)\cdot \tau_2].  \end{eqnarray*} 
 Assume that the perimeter of the curve $BO\cup OC $ is bigger than the perimeter of the curve
 $OA\cup AC\cup AB$: this means that
  \begin{eqnarray*} \phi(\nu_1)+\phi(\nu_2)&=& \Per(BOC)\leq \Per(OACB) \\ 
 &=&\frac{ c}{2\cos\alpha} \phi(\nu_1+\nu_2) +\sqrt{1+c^2-2c\cos \alpha}[\phi(\nu_1)+\phi(\nu_2)]\\ 
 && +c\sin\alpha \sqrt{1+c^2-2c\cos \alpha}[\nabla \phi(\nu_1)\cdot \tau_1+\nabla \phi(\nu_2)\cdot \tau_2].  \end{eqnarray*} 
 This implies
  \begin{eqnarray*}  && \frac{2\cos \alpha[1-\sqrt{1+c^2-2c\cos \alpha}]}{c}[\phi(\nu_1)+\phi(\nu_2)]\\ &\leq&\phi(\nu_1+\nu_2)+\sin(2\alpha)  \sqrt{1+c^2-2c\cos \alpha}[\nabla \phi(\nu_1)\cdot \tau_1+\nabla \phi(\nu_2)\cdot \tau_2]\nonumber \end{eqnarray*}
  and sending $c\to 0$
   \begin{equation}\label{uno} 2\cos^2\alpha [ \phi(\nu_1)+\phi(\nu_2)]\leq\phi(\nu_1+\nu_2)+\sin(2\alpha)  [\nabla \phi(\nu_1)\cdot \tau_1+\nabla \phi(\nu_2)\cdot \tau_2]. \end{equation}

Analogously, on the other side, we have (substituting $\alpha$ with $\pi/2-\alpha$, $\nu_2$ with $-\nu_2$  and $\tau_1$ with $-\tau_1$).
\begin{eqnarray*} \Per(ODCE) &= & \frac{ c}{2\sin\alpha} \phi(-\nu_1+\nu_2) +\sqrt{1+c^2-2c\sin \alpha}[\phi(\nu_1)+\phi(\nu_2)]\\ 
 && +c\cos\alpha \sqrt{1+c^2-2c\sin \alpha}[\nabla \phi(\nu_1)\cdot (-\tau_1)+\nabla \phi(-\nu_2)\cdot \tau_2].  \end{eqnarray*} 
 Again, reasoning as above,  assuming that  the perimeter of the curve $EO\cup OC $ is bigger than the perimeter of the curve
 $OD\cup DE\cup DC$ and sending $c\to 0$, we get 
    \begin{equation}\label{due} 2\sin^2\alpha [ \phi(\nu_1)+\phi(\nu_2)]\leq\phi(-\nu_1+\nu_2)-\sin(2\alpha)  [\nabla \phi(\nu_1)\cdot \tau_1+\nabla \phi(\nu_2)\cdot \tau_2]. \end{equation}

Summing up \eqref{uno} and \eqref{due} we obtain
\[ 2 [ \phi(\nu_1)+\phi(\nu_2)]\leq\phi(-\nu_1+\nu_2)+\phi(\nu_1+\nu_2) \] 
which contradicts the strict convexity of $\phi$. 
 \end{proof}

\begin{bibdiv}
\begin{biblist}

\bib{alm}{article}{
AUTHOR = {Almgren, F. J.},
     TITLE = {Existence and regularity almost everywhere of solutions to
              elliptic variational problems with constraints},
   JOURNAL = {Mem. Amer. Math. Soc.},
     VOLUME = {4},
      YEAR = {1976},
    NUMBER = {165},
}

\bib{acmm}{article}{
    AUTHOR = {Ambrosio, L.},
    AUTHOR = {Caselles, V.},
    AUTHOR = {Masnou, S.},
    AUTHOR = {Morel, J.-M.},
     TITLE = {Connected components of sets of finite perimeter and
              applications to image processing},
   JOURNAL = {J. Eur. Math. Soc. (JEMS)},
    VOLUME = {3},
      YEAR = {2001},
    NUMBER = {1},
     PAGES = {39--92},
      ISSN = {1435-9855},
}

\bib{bfv}{article}{
  AUTHOR = {Barrios, Bego\~{n}a}, 
  author={Figalli, Alessio}, 
  author={ Valdinoci, Enrico},
     TITLE = {Bootstrap regularity for integro-differential operators and
              its application to nonlocal minimal surfaces},
   JOURNAL = {Ann. Sc. Norm. Super. Pisa Cl. Sci. (5)},
    VOLUME = {13},
      YEAR = {2014},
    NUMBER = {3},
     PAGES = {609--639},
     }
\bib{bs}{article}{
author={Bessas, K.},
author={Stefani, G.},
title={ Non-local $BV$ functions and a denoising model with $L^1$ fidelity},
 JOURNAL = {arxiv preprint https://arxiv.org/abs/2210.11958},
  YEAR = {2022},
 }

\bib{bombieri}{article}{
    AUTHOR = {Bombieri, E.},
     TITLE = {Regularity theory for almost minimal currents},
   JOURNAL = {Arch. Rational Mech. Anal.},
    VOLUME = {78},
      YEAR = {1982},
    NUMBER = {2},
     PAGES = {99--130},
}

\bib{crs}{article}{
    AUTHOR = {Caffarelli, L.},
    AUTHOR = {Roquejoffre, J.-M.},
    AUTHOR = {Savin, O.},
     TITLE = {Nonlocal minimal surfaces},
   JOURNAL = {Comm. Pure Appl. Math.},
    VOLUME = {63},
      YEAR = {2010},
    NUMBER = {9},
     PAGES = {1111--1144},
}

\bib{cn}{article}{
    AUTHOR = {Cesaroni, A.},
    author={Novaga, M.},
     TITLE = {Nonlocal minimal clusters in the plane},
   JOURNAL = {Nonlinear Anal.},
    VOLUME = {199},
      YEAR = {2020},
      }

\bib{choe}{article}{
   AUTHOR = {Choe, J.},
     TITLE = {On the existence and regularity of fundamental domains with
              least boundary area},
   JOURNAL = {J. Differential Geom.},
    VOLUME = {29},
      YEAR = {1989},
    NUMBER = {3},
     PAGES = {623--663},
}

\bib{cm} {article}{
    AUTHOR = {Colombo, M.},
    author={Maggi, F.},
     TITLE = {Existence and almost everywhere regularity of isoperimetric
              clusters for fractional perimeters},
   JOURNAL = {Nonlinear Anal.},
       VOLUME = {153},
      YEAR = {2017},
     PAGES = {243--274},
}

\bib{ct1}{article}{
    AUTHOR = {Congedo, G.},
    author={Tamanini, I.},
     TITLE = {On the existence of solutions to a problem in multidimensional
              segmentation},
   JOURNAL = {Ann. Inst. H. Poincar\'{e} C Anal. Non Lin\'{e}aire},
     VOLUME = {8},
      YEAR = {1991},
    NUMBER = {2},
     PAGES = {175--195},
}

\bib{dmnp}{article}{
    AUTHOR = {Dayrens, F.},
    AUTHOR = {Masnou, S.},
    AUTHOR = {Novaga, M.},
    AUTHOR = {Pozzetta, M.},
     TITLE = {Connected perimeter of planar sets},
   JOURNAL = {Adv. Calc. Var.},
     VOLUME = {15},
      YEAR = {2022},
    NUMBER = {2},
     PAGES = {213--234},
     }
		
\bib{dnrv}{article}{
   AUTHOR = {Di Castro, A.}, 
   author={ Novaga, M.},
   author={Ruffini, B.}, 
   author={Valdinoci, E.},
     TITLE = {Nonlocal quantitative isoperimetric inequalities},
   JOURNAL = {Calc. Var. Partial Differential Equations},
     VOLUME = {54},
      YEAR = {2015},
    NUMBER = {3},
     PAGES = {2421--2464},
}

\bib{fedorov}{article}{ 
AUTHOR = {Fedorov, E.S.},
TITLE = {The Symmetry of Regular Systems of Figures}, 
JOURNAL = {Proceedings of the Imperial St. Petersburg Mineralogical Society}, 
VOLUME = {28}, 
YEAR = {1891},
PAGES = {1--146},
} 

\bib{fm}{article}{ 
AUTHOR = {Fonseca, I.},
AUTHOR = {M\"uller, S.},
TITLE = {A uniqueness proof for the Wulff theorem}, 
JOURNAL = {Proc. R. Soc. Edinb. Sect. A}, 
VOLUME = {119}, 
NUMBER = {1-2},
YEAR = {1991},
PAGES = {125--136},
} 

\bib{fssc}{article}{
   AUTHOR = {Franchi, B.}, 
   author={Serapioni, R.},
   author={Serra Cassano, F.}, 
     TITLE = {Rectifiability and perimeter in the Heisenberg group},
   JOURNAL = {Math. Ann.},
     VOLUME = {321},
      YEAR = {2001},
     PAGES = {479--531},
}


\bib{h}{article}{   
AUTHOR = {Horv\'{a}th, \'{A}. G.},
     TITLE = {Extremal polygons with minimal perimeter},
      NOTE = {3rd Geometry Festival: an International Conference on
              Packings, Coverings and Tilings (Budapest, 1996)},
   JOURNAL = {Period. Math. Hungar.},
      VOLUME = {34},
      YEAR = {1997},
    NUMBER = {1-2},
     PAGES = {83--92}, 
}

\bib{jw}{article}{
   AUTHOR = {Jarohs, S.}, 
   author={ Weth, T.},
     TITLE = {Local compactness and nonvanishing for weakly singular
              nonlocal quadratic forms},
   JOURNAL = {Nonlinear Anal.},
      VOLUME = {193},
      YEAR = {2020},
     PAGES = {111431, 15 pp},}

\bib{gippo}{article}{ 
    AUTHOR = {Leonardi, G. P.},
     TITLE = {Infiltrations in immiscible fluids systems},
   JOURNAL = {Proc. Roy. Soc. Edinburgh Sect. A},
    VOLUME = {131},
      YEAR = {2001},
    NUMBER = {2},
     PAGES = {425--436},
}
     	
\bib{maggi}{book}{
    AUTHOR = {Maggi, F.},
     TITLE = {Sets of finite perimeter and geometric variational problems},
    SERIES = {Cambridge Studies in Advanced Mathematics},
    VOLUME = {135},
 PUBLISHER = {Cambridge University Press, Cambridge},
      YEAR = {2012},
}
		
\bib{mnpr}{article}{
    AUTHOR = {Martelli, B.}, 
    author={Novaga, M.}, 
    author={Pluda, A.}, 
    author={ Riolo, S.},
     TITLE = {Spines of minimal length},
   JOURNAL = {Ann. Sc. Norm. Super. Pisa Cl. Sci. (5)},
    VOLUME = {17},
      YEAR = {2017},
    NUMBER = {3},
     PAGES = {1067--1090}, 
}
	
\bib{mt}{article}{
author={ Massari, U.},
author={Tamanini, I.},
TITLE = {Regularity properties of optimal segmentations},
   JOURNAL = {J. Reine Angew. Math.},
    VOLUME = {420},
      YEAR = {1991},
     PAGES = {61--84},
}

\bib{morgan}{article}{
    AUTHOR = {Morgan, F.},
    AUTHOR = {French, C.},
    AUTHOR = {Greenleaf, S.},
     TITLE = {Wulff clusters in {$\mathbb R^2$}},
   JOURNAL = {J. Geom. Anal.},
    VOLUME = {8},
      YEAR = {1998},
    NUMBER = {1},
     PAGES = {97--115},
}
		
\bib{npst}{article}{
    AUTHOR = {Novaga, M.}, 
    author={Paolini, E.}, 
    author={Stepanov, E.}, 
    author={ Tortorelli, V.M.},
     TITLE = {Isoperimetric clusters in homogeneous spaces via concentration compactness},
   JOURNAL = { J. Geom. Anal. },
    VOLUME = {32},
      YEAR = {2022},
      NUMBER = {11},
      PAGES = {Paper No. 263}, 
}

\bib{npst2}{article}{
    AUTHOR = {Novaga, M.}, 
    author={Paolini, E.}, 
    author={Stepanov, E.}, 
    author={ Tortorelli, V.M.},
     TITLE = {Isoperimetric planar clusters with infinitely many regions},
   JOURNAL = {arxiv preprint https://arxiv.org/abs/2210.05286},
      YEAR = {2022},
}
\bib{ct2}{article}{
    author={Tamanini, I.},
    AUTHOR = {Congedo, G.},
   title= {Density theorems for local minimizers of area-type
              functionals},
   JOURNAL = {Rend. Sem. Mat. Univ. Padova},
    VOLUME = {85},
      YEAR = {1991},
     PAGES = {217--248},
}
\bib{taylor}{article}{
    AUTHOR = {Taylor, J. E.},
     TITLE = {The structure of singularities in solutions to ellipsoidal
              variational problems with constraints in {${\rm R}^{3}$}},
   JOURNAL = {Ann. of Math. (2)},
    VOLUME = {103},
      YEAR = {1976},
    NUMBER = {3},
     PAGES = {541--546},
}

\bib{thompson}{article}{ 
AUTHOR = {Thomson (Lord Kelvin), W. },
     TITLE = {On the division of space with minimum partitional area},
   JOURNAL = {Acta Math.},
    VOLUME = {11},
      YEAR = {1887},
    NUMBER = {1-4},
     PAGES = {121--134},
}

\bib{kelvin}{book}{
EDITOR = {Weaire, D.},
     TITLE = {The {K}elvin problem},
         NOTE = {Foam structures of minimal surface area},
 PUBLISHER = {Taylor \& Francis, London},
      YEAR = {1996},
}
\end{biblist}\end{bibdiv}

 \end{document}